    \title[NCI-hypergraphs]{A hypergraph characterization of nearly complete intersections}
    \date{\today}
    \author[Bondi]{Chiara Bondi}
    \author[Gibbons]{Courtney R. Gibbons}
    \author[Ke]{Yuye Ke}
    \author[Martin]{Spencer Martin}
    \author[Pothagoni]{Shrunal Pothagoni}
    \author[Stelzer]{Andrew Stelzer}
    \address[Courtney R. Gibbons, Corresponding Author]{ Hamilton College, 198 College Hill Road, Clinton, NY 13323}
    \email[Courtney R. Gibbons]{crgibbon@hamilton.edu}
    \address[Chiara Bondi ]{Hamilton College, 198 College Hill Road, Clinton, NY 13323}
    \address[Yuye Ke]{The Ohio State University, 281 W. Lane Ave, Columbus, OH 43210}
    \address[Spencer Martin]{University of Virginia, Charlottesville, VA 22904}
    \address[Shrunal Pothagoni]{George Mason University, 4400 University Drive, Fairfax, Virginia 22030}
    \address[Andrew Stelzer]{Lawrence University, 711 E. Boldt Way, Appleton, WI 54911}
    \thanks{This work was initiated as part of the virtual COURAGE REU in Summer 2020, supported by Clemson University's School of Mathematical and Statistical Sciences.\\
    The authors thank the referee for their helpful and kind comments and uncountably many women in commutative algebra for inspiration, advice, and friendship.}
    \subjclass[2020]{13C05, 13C13, 13C40, 13D02, 13A70}
\tikzset{Text/.style={black}}
\tikzset{Vert/.style={circle,fill=orange!50, scale=0.7, draw=black}}
\tikzstyle{edge} = [fill,opacity=.5,fill opacity=.5,line cap=round, line join=round, line width=25pt, draw=black]
\tikzstyle{1edge} = [fill,opacity=.5,fill opacity=.5,line cap=round, line join=round, line width=20pt, draw=black]
\newcommand{\defi}[1]{\textbf{\textit{#1}}}
\newcommand{\lcm}{\text{lcm}}
\NewDocumentCommand\invert{mg}{%
    \ensuremath{\mathcal{I}_{#1}\IfNoValueTF{#2}{}{(#2)}}%
}
\NewDocumentCommand\induced{mg}{%
    \ensuremath{\text{Ind}_{#1}\IfNoValueTF{#2}{}{(#2)}}%
}
\NewDocumentCommand\weakinduced{mg}{%
    \ensuremath{\text{WeakInd}_{#1}\IfNoValueTF{#2}{}{(#2)}}%
}
\NewDocumentCommand\minimalsub{g}{%
    \ensuremath{\text{Min}\IfNoValueTF{#1}{}{(#1)}}%
}
\NewDocumentCommand\pdim{g}{%
    \ensuremath{\text{pd}\IfNoValueTF{#1}{}{(#1)}}%
}
\NewDocumentCommand\join{mg}{%
    \ensuremath{\text{Join}_{#1}\IfNoValueTF{#2}{}{(#2)}}%
}
    \newtheorem{theorem}{Theorem}[section]
    \newtheorem{lemma}[theorem]{Lemma}
    \newtheorem*{thm*}{Theorem}
\theoremstyle{definition}
    \newtheorem{definition}[theorem]{Definition}
    \newtheorem{example}[theorem]{Example}
\theoremstyle{remark}
    \newtheorem{remark}[theorem]{Remark}
\numberwithin{equation}{section}
\begin{document}

    \begin{abstract}
        Recently, nearly complete intersection ideals were defined by Boocher and Seiner to establish lower bounds on Betti numbers for monomial ideals \cite{BoocherSeiner}. Stone and Miller then characterized nearly complete intersections using the theory of edge ideals \cite{StoneMiller}. We extend their work to fully characterize nearly complete intersections of arbitrary generating degrees and use this characterization to compute minimal free resolutions of nearly complete intersections from their degree 2 part.
    \end{abstract}

    \maketitle

    \section{Introduction}\label{sec:Introduction}
        A standard tool for studying a homogeneous ideal $I$ over a polynomial ring $R$ is the minimal free resolution of the module $R/I$. The rank of the $i$-th module in the free resolution is called the $i$-th Betti number of $R/I$ and denoted $\beta_i(R/I)$, and these invariants, along with their graded counterparts (denoted $\beta_{ij}(R/I)$, the $ij$-th Betti number of $R/I$ is the number of degree $j$ minimal generators of the $i$-th syzygy of $R/I$), have been the focus of much study. When $I$ is a complete intersection ideal with $g$ generators, its free resolution is the Koszul complex on its generators, and its $i$-th Betti number is given by $\beta_i(R/I) = \binom{g}{i}$.

        The Buchsbaum-Eisenbud and Horrocks rank conjecture (see \cite[1.4]{BuchBud}) states that an ideal $I$ of height $c$ will always satisfy $\beta_i(R/I) \geq \binom{c}{i}$.  In the case of a complete intersection, it happens that $g = c$ and there is an equality.  When $g > c$ and $I$ is not a complete intersection, much less is known. In \cite{Walker2017}, Walker proved that the sum of the Betti numbers of an arbitrary ideal $I$ is at least $2^c$ if the characteristic of the base field is not 2. In the case of monomial ideals, \cite{BoocherSeiner}, Boocher and Seiner showed that if $I$ is not a complete intersection ideal, then the sum of its Betti numbers $\sum\beta_i(R/I)$ is at least $2^c+2^{c-1}$. Boocher and Seiner's proof centers on reducing arbitrary monomial ideals to a new class they called nearly complete intersections.  They define a \defi{nearly complete intersection} to be a square-free monomial ideal $I$ (not itself a complete intersection) such that for any variable $x$ in the support of $I$, the ideal $I(x=1)$ (formed by substituting in 1 for $x$ in the generators of $I$) is a complete intersection ideal.  Geometrically, one can view a nearly complete intersection as a monomial ideal over a homogeneous ring whose standard affine patches are given by complete intersection ideals.
        
        In \cite{StoneMiller}, Miller and Stone succeeded in classifying nearly complete intersections generated in degree two as the edge ideals of a certain set of simple graphs.
        \begin{thm*}[Miller-Stone 2020] Let $G$ be a connected graph with at least 3 vertices. The edge ideal $I$ of $G$ is \emph{not} a nearly complete intersection (generated in degree 2) if and only if there exist vertices $v_1$, $v_2$, $v_3$, $v_4$, $v_5$ in $G$ such that $v_1$ is a leaf in their induced subgraph $H$ and $H$ has a spanning tree isomorphic to one of the following:
        \begin{center}
            \begin{tikzpicture}
                [scale=1,auto=left,every loop/.style={}]
                \node[Vert] (n1) at (0,1) {$v_1$};
                \node[Vert] (n2) at (1,1) {\phantom{$v_2$}};
                \node[Vert] (n3) at (2,1) {\phantom{$v_3$}};
                \node[Vert] (n4) at (3,1) {\phantom{$v_4$}};
                \node[Vert] (n5) at (4,1) {\phantom{$v_5$}};
                
                \draw (n1) -- (n2) -- (n3) -- (n4) -- (n5);
                
                \node[Text] (or) at (5,1) {$\text{or}$};
        
                \node[Vert] (m1) at (6,1) {$v_1$};
                \node[Vert] (m2) at (7,1) {\phantom{$v_2$}};
                \node[Vert] (m3) at (8,1) {\phantom{$v_3$}};
                \node[Vert] (m4) at (8.5,0) {\phantom{$v_4$}};
                \node[Vert] (m5) at (8.5,2) {\phantom{$v_5$}};
                
                \draw (m1) -- (m2) -- (m3) -- (m4);
                \draw (m3) -- (m5);
                
                \node[Text] (punc) at (9,1) {.};
            \end{tikzpicture}
        \end{center}
        \end{thm*}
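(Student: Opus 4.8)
The plan is to translate the entire statement into graph theory via the standard dictionary between squarefree monomial ideals and their supports, and then characterize the failure of the nearly-complete-intersection property combinatorially. The first step is a structural lemma (surely available earlier): a squarefree monomial ideal is a complete intersection exactly when its minimal monomial generators have pairwise disjoint supports, i.e. form a regular sequence. Applying this to the edge ideal $I = I(G)$, I would record two consequences. First, $I(G)$ is a complete intersection iff $G$ is a disjoint union of edges and isolated vertices; in particular a connected $G$ on at least three vertices is never a complete intersection, so for such $G$ the property ``not a nearly complete intersection'' reduces to ``there is a vertex $x$ with $I(x=1)$ not a complete intersection.'' Second, I would compute the minimal generators of $I(x=1)$: the linear forms $x_j$ for $j$ a neighbor of $x$, together with the quadratic generators $x_i x_j$ for edges $\{i,j\}$ of the induced subgraph $G[V\setminus N[x]]$ on the non-neighbors of $x$. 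The linear generators are pairwise coprime and coprime to every surviving quadratic, so by the structural lemma $I(x=1)$ is a complete intersection iff the edges of $G[V\setminus N[x]]$ are pairwise disjoint, i.e. iff $G[V\setminus N[x]]$ has maximum degree at most $1$.

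Combining these, the theorem reduces to a purely graph-theoretic claim: for connected $G$ on at least three vertices, some $G[V\setminus N[x]]$ contains a vertex of degree at least two if and only if $G$ has a five-vertex induced subgraph $H$ with a leaf $v_1$ such that $H$ admits a spanning tree isomorphic to $P_5$ or to the fork. A vertex of degree at least two in $G[V\setminus N[x]]$ is exactly a ``cherry'' $v_4 - v_3 - v_5$ all of whose vertices lie outside $N[x]$, so I would restate the target in terms of such a configuration $(x; v_3, v_4, v_5)$. For the backward direction I would simply take $x = v_1$: because $v_1$ is a leaf of $H$ it is adjacent in $G$ to none of $v_3, v_4, v_5$, so these survive in $G[V\setminus N[v_1]]$, and the spanning-tree edges among them ($v_3 v_4, v_4 v_5$ in the path case, $v_3 v_4, v_3 v_5$ in the fork case) exhibit a vertex of degree at least two, whence $I(v_1 = 1)$ is not a complete intersection.

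The forward direction carries the real weight, and is the step I expect to be the main obstacle. Starting from a cherry $v_4 - v_3 - v_5$ outside $N[x]$, I would take a shortest path $x = w_0, w_1, \ldots, w_m = v_3$ (necessarily $m \ge 2$, since $v_3 \notin N[x]$) and split on $m$. If $m \ge 4$, the first five vertices $w_0, \ldots, w_4$ induce a $P_5$, since shortest paths are chordless, with $x = v_1$ an endpoint. If $m = 2$, the five vertices $\{x, w_1, v_3, v_4, v_5\}$ carry the fork with center $v_3$ and edges $x w_1, w_1 v_3, v_3 v_4, v_3 v_5$, with $x = v_1$ as the far leaf. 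The delicate case is $m = 3$: there I would pick whichever of $v_4, v_5$ differs from $w_2$ (possible as they are distinct), say $v_4$, and use $\{x, w_1, w_2, v_3, v_4\}$, whose edges $x w_1, w_1 w_2, w_2 v_3, v_3 v_4$ form a Hamiltonian path starting at $x$.

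In every case two observations are what make the argument close, and identifying them is the crux. First, $x$ has degree exactly one in the chosen induced subgraph, because its only possible neighbor among the five is $w_1$ (the remaining vertices lie outside $N[x]$ or at graph-distance two or three from it), so $x$ is a genuine leaf and is forced into the endpoint (resp. far-leaf) role required for $v_1$. Second, one only needs a \emph{spanning tree} of the induced subgraph to be $P_5$ or the fork, not the induced subgraph itself; this slack is exactly what absorbs any stray chords (for instance a possible edge $w_2 v_4$ in the $m=3$ case) and lets $v_1 = x$ land in the labeled position. The main obstacle, then, is not a single hard estimate but the bookkeeping needed to guarantee five distinct vertices, to place the leaf correctly, and to confirm that the spanning tree is isomorphic to whichever of the two trees the distance $m$ produces.
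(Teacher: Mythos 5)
Your proposal is correct, but there is no in-paper proof to compare it with: this statement is Miller and Stone's theorem, which the paper quotes from \cite{StoneMiller} as background and uses as a black box (it is what makes Theorem~\ref{thm:main} a complete characterization), so your argument is a self-contained proof of a result the paper imports. The skeleton of your reduction is sound: for connected $G$ on at least $3$ vertices the edge ideal is never a complete intersection, so ``not an NCI'' becomes ``some $I(x=1)$ is not a CI''; the minimal generators of $I(x=1)$ are the linear forms from $N(x)$ together with the edges of $G[V\setminus N[x]]$; and the standard criterion that a monomial ideal is a complete intersection iff its minimal generators have pairwise disjoint supports (one direction of which is the paper's Lemma~\ref{lemma:CI-hypergraph}) converts the condition into ``$G[V\setminus N[x]]$ has a vertex of degree at least $2$.'' Both directions of the resulting combinatorial statement check out. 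Backward: the leaf hypothesis on $v_1$ forces $v_3,v_4,v_5 \notin N[v_1]$, so the cherry survives inversion at $v_1$. Forward: your case split on the distance $m$ from $x$ to the cherry's center is exhaustive since $v_3\notin N[x]$ gives $m\geq 2$; in each case the five vertices are distinct (because $v_4,v_5\notin N[x]$ rules out coincidence with $x$ or $w_1$, and at most one of $v_4,v_5$ can equal $w_2$), the vertex $x$ has degree exactly $1$ in the induced subgraph, and the displayed tree edges are spanning trees of $H$ regardless of chords among the remaining four vertices --- which, as you note, is exactly the slack the ``spanning tree'' formulation provides. Two points deserve explicit justification in a write-up: (i) in the case $m\geq 4$, that a subpath of a shortest path is shortest and hence chordless, so the induced subgraph on $w_0,\dots,w_4$ is genuinely $P_5$ and $x$ genuinely a leaf; and (ii) the coprimality bookkeeping showing the linear generators of $I(x=1)$ never interfere with the surviving quadratics, which is what makes the CI test depend only on $G[V\setminus N[x]]$.
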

        
        Our main result, Theorem~\ref{thm:main}, generalizes Miller and Stone's work by classifying all nearly complete intersections as the edge ideals of particularly nice hypergraphs, eliminating the degree restriction. In Section~\ref{sec:Background}, we extend Miller and Stone's notion of vertex inversion to hypergraphs in order to describe nearly complete intersections (hyper)graphically. We prove that this operation commutes with taking induced sub-hypergraphs, laying the foundation for a complete characterization of NCI-hypergraphs in Section~\ref{sec:NCI-hypergraphs}. Section~\ref{sec:NCIres} contains a decomposition of the graded Betti numbers of an arbitrary nearly complete intersection $I$, using an iterated mapping cone construction to reduce the general case to degree 2; we also comment on the effect higher degree generators have on homological invariants of these ideals.
        
        Finally, we include an appendix to discuss an alternative representation of nearly complete intersections as vertex-weighted graphs and share a webtool developed to make the construction and manipulation of these graphs easier.

    \section{Hypergraphs and squarefree monomial ideals}\label{sec:Background}
    
        A \defi{simple hypergraph} $G$ is a pair $(V, E)$ where $V$ is a set and $E \subseteq 2^V$ is a subset of the power set of $V$. We call an element $v\in V$ a \defi{vertex} of $G$ and an element $e\in E$ of cardinality $k$ a \defi{k-edge} of $G$. 
        
        When two elements $e, f\in E$ have the property that $e \subseteq f$, we say that $e$ is a \defi{subedge} of $f$.  If $f$ has no subedges except for itself in $G$, then we call $f$ \defi{minimal}. If every $k$-edge of $G$ is minimal, then we call $G$ a \defi{minimal hypergraph}. Given an arbitrary hypergraph $G$, we can always remove nonminimal edges to get a unique minimal hypergraph, which we denote $\minimalsub{G}$. In this paper, all hypergraphs are assumed to be minimal unless otherwise indicated. We will often refer to 2-edges as ``edges" and $k$-edges for $k\geq3$ as ``hyperedges". When 1-edges occur, we identify them as such.
        
        Given a hypergraph $G = (V, E)$, its \defi{skeleton} is the simple graph $S = (V, E_S)$, where $E_S$ is the set of all 2-edges in $E$. We define the \defi{2-neighbor set} $N(v)$ for a vertex $v$ in $V$ to be its neighbors in the skeleton of $G$, i.e. $N(v) = \{w|\{v, w\}\in E_S\}$. The next definitions provide two distinct yet closely related ways to take sub-hypergraphs of $G$, both of which will be important in this paper.
        
        \begin{definition}\label{defi:induced-hyper}
            An \defi{induced sub-hypergraph} of $G = (V,E)$ is a minimal hypergraph $G' = (V',E')$ where $V' \subseteq V$ and $E' = E \cap 2^{V'}$, the set of all $e\in E$ containing only vertices in $V'$. We denote this sub-hypergraph by $G' = \induced{V'}{G}$
        \end{definition}
        
        \begin{definition}\label{defi:weak-induced-hyper}
            A \defi{weak induced sub-hypergraph} of $G = (V, E)$ is a (not necessarily minimal) hypergraph $G' = (V',E')$ where $V' \subseteq V$ and $E' = \{e \cap V' \vert e \in E\}$. Instead of removing $k$-edges that aren't strictly contained in $V'$, we restrict each one down to the vertices in $V'$. We denote this sub-hypergraph by $G' = \weakinduced{V'}{G}$.
        \end{definition}
        
        Next, we describe how to represent squarefree monomial ideals as minimal hypergraphs. Let $R = k[x_1,...,x_n]$ be a standard graded $k$-algebra, where $k$ is a field. Unless stated otherwise, all ideals are assumed to be squarefree monomial ideals over $R$. The minimal generating set of such an ideal naturally leads to the following definition:
        
        \begin{definition}\label{defi:hypergraph-of-I}
        	Let $I$ be a squarefree monomial ideal with minimal monomial generating set $\{m_1,...,m_\ell\}$ over $R$. The \defi{hypergraph} of $I$ (denoted $G(I)$) is the minimal hypergraph $(V, E)$, where $V$ is the support of $I$ (i.e. all variables of $R$ appearing in some $m_\ell$) and $E = \{m_1,...,m_\ell\}$.
        
        Conversely, given a minimal hypergraph $G = (V, E)$ where $V = \{v_1,...,v_n\}$ and $E = \{m_1,...,m_\ell\}$, the \defi{(hyper)edge ideal} of $G$ is the squarefree monomial ideal $I(G) = (m_1,...,m_\ell)$ over the ring $R = k[v_1,...,v_n]$. Since the minimal monomial generators of a squarefree monomial ideal are unique, this yields a one-to-one correspondence between minimal hypergraphs and squarefree monomial ideals.
        \end{definition}
        
        \begin{example}
            Let $I = (x_1,x_2x_3,x_3x_4x_5)$. Then the hypergraph $G(I)$ consists of a $1$-edge around vertex $x_1$, an edge connecting vertices $x_2$ and $x_3$, and a hyperedge containing vertices $x_3,x_4,$ and $x_5$.
        \end{example}
        
        \begin{center}
            \begin{tikzpicture}
                [scale=.4,auto=left,every node/.style={circle,fill=orange!50, scale=0.7, draw=black},every loop/.style={}]
                \node (n1) at (0,0) {$x_1$};
                \node (n2) at (0,4) {$x_2$};
                \node (n3) at (4,0) {$x_3$};
                \node (n4) at (4,2) {$x_4$};
                \node (n5) at (4,4) {$x_5$};
                
                \draw (n2) -- (n3);
                
                \begin{pgfonlayer}{background}
                \begin{scope}[transparency group, opacity=0.5]
                \fill[edge,opacity=1,color=blue, line width=25pt] (n3.center) -- (n4.center) -- (n5.center) -- (n3.center);
                \end{scope}
                \node[1edge, color=green] (bgedge) at (0,0) {};
                \end{pgfonlayer}
            \end{tikzpicture}
        \end{center}
        
        Since nearly complete intersection ideals are squarefree monomial ideals by definition, they can be represented by hypergraphs, and much of this paper is devoted to establishing the structure of these representations. We begin with a trivial definition before translating the algebraic definition of a nearly complete intersection into hypergraph theory.
        
        \begin{definition}
            A hypergraph $G$ is an \defi{NCI-hypergraph} if its (hyper)edge ideal $I(G)$ is a nearly complete intersection. Similarly, $G$ is a \defi{CI-hypergraph} if $I(G)$ is a complete intersection ideal.
        \end{definition}
        
        \begin{lemma}\label{lemma:CI-hypergraph}
            If $G = (V, E)$ is a CI-hypergraph, then each vertex $v$ is contained in at most one $k$-edge $e$ in $E$.
        \end{lemma}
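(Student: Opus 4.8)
The plan is to translate the complete intersection hypothesis into the statement that the minimal generators of $I(G)$ are pairwise coprime, and then to read this condition off directly on the hypergraph. If a vertex $v$ belonged to two distinct edges $e, f \in E$, then the corresponding minimal monomial generators $m_e$ and $m_f$ of $I = I(G)$ would both be divisible by the variable $v$. So it suffices to prove that the minimal generators of a monomial complete intersection are pairwise coprime, and then invoke the contrapositive.

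Since $I$ is a complete intersection, its minimal generators $m_1, \dots, m_\ell$ form a regular sequence, and because these are homogeneous elements lying in the irrelevant maximal ideal of the graded ring $R$, any reordering and any initial segment of them is again a regular sequence. In particular, for any two distinct generators $m_e, m_f$, the pair $m_e, m_f$ is a regular sequence of length two. I am thus reduced to the purely combinatorial claim that two distinct monomials sharing a variable cannot form a regular sequence.

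For this, set $d = \gcd(m_e, m_f)$ and suppose $d \neq 1$ (which holds since $v \mid d$). Put $a = m_e/d$ and $b = m_f/d$, so that $\gcd(a,b) = 1$ and $b\,m_e = a\,m_f$. Then $m_f \cdot a = b\,m_e \in (m_e)$, yet $a \notin (m_e)$ because $a$ is a proper divisor of $m_e$ (its degree is strictly smaller, as $\deg d \geq 1$, so $m_e \nmid a$). Hence $a$ is a nonzero element of $R/(m_e)$ annihilated by $m_f$, so $m_f$ is a zerodivisor modulo $(m_e)$ and $m_e, m_f$ is not a regular sequence. This contradiction shows no vertex lies in two distinct edges.

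The main obstacle is the reduction step: one must be careful that passing from the full regular sequence $m_1, \dots, m_\ell$ to an arbitrary pair $m_e, m_f$ preserves the regular-sequence property. This is exactly where permutability of regular sequences of homogeneous elements is used; the subsequent zerodivisor computation is routine. One could alternatively argue through heights, using that a height-$\ell$ ideal minimally generated by $\ell$ squarefree monomials is forced to have pairwise disjoint supports, but the regular-sequence argument above seems the most direct.
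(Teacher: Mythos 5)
Your proposal is correct and follows essentially the same approach as the paper: both observe that a vertex lying in two edges forces two minimal generators of $I(G)$ to share the variable $v$, contradicting the complete intersection hypothesis. The only difference is that the paper treats the implication ``shared variable $\Rightarrow$ not a complete intersection'' as immediate, while you supply the standard justification (minimal generators of a complete intersection form a regular sequence, and two monomials with a common factor fail the zerodivisor test), which is a fine, if optional, elaboration.
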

            
            \begin{proof}
                If some $v$ is contained in both $e_1$ and $e_2$, then $I(G)$ must have two minimal generators both divisible by $v$, and therefore $I(G)$ cannot be a complete intersection ideal.
            \end{proof}

        As ideals, nearly complete intersections are defined using a localization operation that amounts to substituting $x=1$ for a variable $x$ in the support of $I$. This operation can be written in the language of hypergraphs we have developed so far.
        
        \begin{definition}
            Let $G = (V,E)$ be a hypergraph and let $v \in V$. The \defi{vertex inversion} of $G$ at $v$ is given by $\invert{v}{G} = \minimalsub{\weakinduced{V \backslash \{v\}}{G}}$
        \end{definition}
        
        \begin{remark}
            Let $I$ be a square-free monomial ideal. An inversion at any vertex $x_i$ in the hypergraph of $I$ yields the hypergraph of $I(x_i=1)$. From this, it immediately follows that a hypergraph $G$ is an NCI-hypergraph iff $\invert{v}{G}$ is a CI-hypergraph for every vertex $v$ in $G$.
        \end{remark}
        
        In later sections we will frequently use the key fact that the operations of vertex inversion and taking induced subhypergraphs commute. Our rigorous proof of this fact is followed by Figure~\ref{fig:comm-ops} which demonstrates the basic idea.
        
        \begin{lemma}\label{lemma:commuting-operations}
            Let $G$ be a hypergraph, let $V'$ be a vertex subset of $G$, and let $v\in V'$. Then $\invert{v}{\induced{V'}{G}} = \induced{V'}{\invert{v}{G}}$.
        \end{lemma}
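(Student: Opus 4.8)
The plan is to unwind both sides into explicit edge families and then isolate a single combinatorial fact about how $\minimalsub$ interacts with restriction. First observe that since $v \in V'$, the vertex $v$ does not survive the inversion on the right-hand side, so $\induced{V'}{\invert{v}{G}}$ is really the induced sub-hypergraph on $W := V' \setminus \{v\}$; the left-hand side $\invert{v}{\induced{V'}{G}}$ also has vertex set $W$. Thus both sides share a vertex set, and it suffices to check that their edge sets coincide. Throughout I will write $A = \{e \setminus \{v\} : e \in E\}$ for the family of all $v$-restricted edges and $B = \{e \setminus \{v\} : e \in E,\ e \subseteq V'\}$ for those coming from edges already contained in $V'$.

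Next I would compute each side. Because $G$ is minimal, $\induced{V'}{G}$ needs no further reduction: its edge set is exactly $\{e \in E : e \subseteq V'\}$. Applying the vertex inversion (weak induction followed by $\minimalsub$) then gives the left-hand edge set as $\minimalsub{B}$. On the other side, $\invert{v}{G}$ has edge set $\minimalsub{A}$, and restricting to those edges contained in $W$ to form the induced sub-hypergraph yields $\minimalsub{A} \cap B$; here I use that $\minimalsub{A}$ is already minimal, so selecting a sub-family preserves minimality and no extra reduction occurs. The one bookkeeping point to record carefully is that, precisely because $v \in V'$, an edge $e$ satisfies $e \subseteq V'$ if and only if $e \setminus \{v\} \subseteq W$; this is what lets me rewrite $B = \{a \in A : a \subseteq W\}$ and identify the restricted induced edges with $B$.

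It then remains to prove the identity $\minimalsub{B} = \minimalsub{A} \cap B$ whenever $B = \{a \in A : a \subseteq W\}$. The inclusion $\minimalsub{A} \cap B \subseteq \minimalsub{B}$ is immediate: since $B \subseteq A$, an element of $B$ with no proper subedge in the larger family $A$ certainly has none in $B$. The reverse inclusion is the heart of the argument and the only place any real obstacle arises. Given $f \in \minimalsub{B}$, I must show $f$ is also minimal in $A$; if some $g \in A$ had $g \subsetneq f$, then $g \subseteq f \subseteq W$ would force $g \in B$, contradicting the minimality of $f$ in $B$. The crux is therefore the downward-closure of $B$ inside $A$ under inclusion, which guarantees that passing to the smaller family can neither destroy nor create minimal edges. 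Once this fact is in hand, both edge sets equal $\minimalsub{B}$ and the commutation follows; a figure tracing a single hyperedge through the two orders of operations makes the mechanism transparent.
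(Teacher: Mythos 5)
Your proof is correct and takes essentially the same route as the paper's: the key identity $\minimalsub{B} = \minimalsub{A} \cap B$, proved via the downward-closure of $B$ inside $A$, is precisely the paper's poset argument that $\minimalsub$ commutes with taking induced sub-hypergraphs, and your ``bookkeeping point'' that $e \subseteq V'$ iff $e \setminus \{v\} \subseteq V' \setminus \{v\}$ is the paper's observation that induced and weak induced sub-hypergraphs commute. The only difference is organizational (you unwind everything into explicit edge families rather than factoring through two commutation facts), so there is nothing to add.
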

        
            \begin{proof} By the definition of vertex inversion and the observation that induced and weak induced subgraphs commute, we have
            \begin{align*} \invert{v}{\induced{V'}{G}} &= \minimalsub{\weakinduced{V'\backslash \{v\}}{\induced{V'}{G}}} \\ &=\minimalsub{\induced{V'}{\weakinduced{V\backslash \{v\}}{G}}}.
             \end{align*}
            
            It remains to show that this expression is equal to the induced sub-hypergraph $\induced{V'}{\minimalsub{\weakinduced{V\backslash \{v\}}{G}}}$. To do so, we show that $\minimalsub$ and $\induced{V'}$ commute in general. Consider the poset $\mathcal{P}$ of (hyper)edges in a hypergraph $G$ ordered by inclusion. Then the poset $\mathcal{P}'$ of (hyper)edges in $\induced{V'}{G}$ is simply some subset of $\mathcal{P}$, so any minimal elements of $\mathcal{P}$ contained in $\mathcal{P}'$ remain minimal in $\mathcal{P}'$. Similarly, if $e \in \mathcal{P}'$ is not minimal in $\mathcal{P}$, then there is $e' \subseteq e$ in $\mathcal{P}$. But by the definition of $\induced{V'}{G}$, we have $e' \in \mathcal{P}'$, so $e$ is not minimal in $\mathcal{P}'$. Thus $e \in \mathcal{P}'$ is minimal if and only if it is minimal in $\mathcal{P}$, so $\induced{V'}$ and $\minimalsub$ do indeed commute.
            
            Therefore, \begin{align*} \invert{v}{\induced{V'}{G}} &= \minimalsub{\weakinduced{V'\backslash \{v\}}{\induced{V'}{G}}} \\ &=\minimalsub{\induced{V'}{\weakinduced{V\backslash \{v\}}{G}}} \\
            &= \induced{V'}{\minimalsub{\weakinduced{V\backslash \{v\}}{G}}} \\
            &= \induced{V'}{\invert{v}{G}}.
             \end{align*}
        \end{proof}
            
        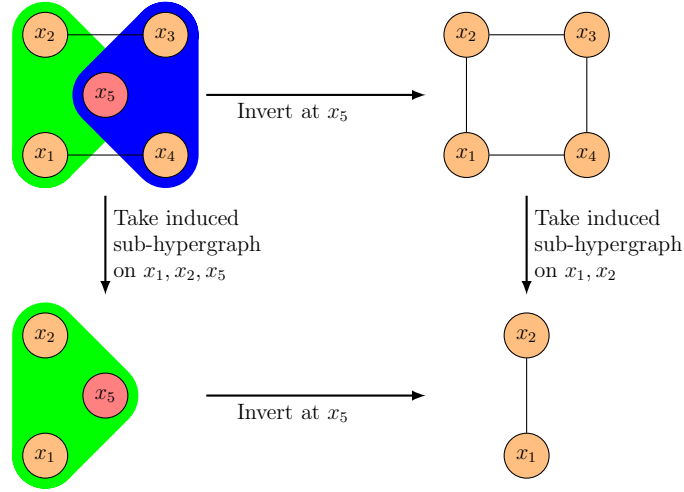
\begin{figure}
            \centering
        	\begin{tikzpicture}
        		[scale=.4,auto=left,every node/.style={circle,fill=orange!50, scale=0.7, draw=black},every loop/.style={}]
          		\node (n1) at (0, 0) {$x_1$};
          		\node (n2) at (0, 4) {$x_2$};
          		\node (n3) at (4, 4) {$x_3$};
          		\node (n4) at (4, 0) {$x_4$};
          		\node[fill=red!50] (n5) at (2, 2) {$x_5$};
          		\node[draw=none, fill=none] (ulr) at (5, 2) {};
          		\node[draw=none, fill=none] (ulb) at (2, -1) {};
          	    
          		\node (m1) at (0 + 14, 0) {$x_1$};
          		\node (m2) at (0 + 14, 4) {$x_2$};
          		\node (m3) at (4 + 14, 4) {$x_3$};
          		\node (m4) at (4 + 14, 0) {$x_4$};
          		
          		\node[draw=none, fill=none] (url) at (-1 + 14, 2) {};
          		\node[draw=none, fill=none] (urb) at (2 + 14, -1) {};
          		
          		\node (nn1) at (0, 0 - 10) {$x_1$};
          		\node (nn2) at (0, 4 - 10) {$x_2$};
          		\node[fill=red!50] (nn5) at (2, 2 - 10) {$x_5$};
          		\node[draw=none, fill=none] (blt) at (2, 5 - 10) {};
          		\node[draw=none, fill=none] (blr) at (5, 2 - 10) {};
          	    
          		\node (mm1) at (0 + 16, 0 - 10) {$x_1$};
          		\node (mm2) at (0 + 16, 4 - 10) {$x_2$};
          		\node[draw=none, fill=none] (brt) at (2 + 14, 5 - 10) {};
          		\node[draw=none, fill=none] (brl) at (-1 + 14, 2 - 10) {};
          		
          		\draw (n1) -- (n4);
          		\draw (n2) -- (n3);
          		
          		\draw (m1) -- (m4);
          		\draw (m1) -- (m2);
          		\draw (m2) -- (m3);
          		\draw (m4) -- (m3);
          		
          		\draw (mm1) -- (mm2);
          		\begin{pgfonlayer}{background}
                \begin{scope}[transparency group, opacity=0.5]
                \fill[edge,opacity=1,color=green] (n1.center) -- (n2.center) -- (n5.center) -- (n1.center);
                \end{scope}
                \begin{scope}[transparency group, opacity=0.5]
                \fill[edge,opacity=1,color=green] (nn1.center) -- (nn2.center) -- (nn5.center) -- (nn1.center);
                \end{scope}
                \begin{scope}[transparency group, opacity=0.5]
                \fill[edge,opacity=1,color=blue] (n3.center) -- (n4.center) -- (n5.center) -- (n3.center);
                \end{scope}
                \end{pgfonlayer}
                
                \draw[-latex,thick] (ulr) -- (url)
                node[midway,below,text width=3cm,fill=none,draw=none,rectangle]{ Invert at $x_5$};
                \draw[-latex,thick] (blr) -- (brl)
                node[midway,below,text width=3cm,fill=none,draw=none,rectangle]{ Invert at $x_5$};
                \draw[-latex,thick] (ulb) -- (blt)
                node[midway,right,text width=3cm,fill=none,draw=none,rectangle]{ Take induced sub-hypergraph on $x_1, x_2, x_5$};
                \draw[-latex,thick] (urb) -- (brt)
                node[midway,right,text width=3cm,fill=none,draw=none,rectangle]{ Take induced sub-hypergraph on $x_1, x_2$};
        		\end{tikzpicture}
        		\caption{Illustration of Lemma \ref{lemma:commuting-operations}.}
        		\label{fig:comm-ops}
        \end{figure}
        
        Before proving results on the hypergraphs of nearly complete intersections, we introduce a special class of hypergraphs that have an almost trivial hyperedge structure.
        
        \begin{definition}
            A hypergraph $G$ is \defi{joinable} if it satisfies the following properties:
            \begin{enumerate}
                \item[(J1)] No two hyperedges of $G$ intersect.
                \item[(J2)] If $e = x_1...x_n$ is a hyperedge of $G$, then $N(x_1)=...=N(x_n)$.
            \end{enumerate}
        \end{definition}
        
        In a joinable hypergraph, hyperedges are only permitted to group disjoint sets of vertices with identical 2-neighbor sets together, creating ``fat vertices" that enable us to view them as simple graphs.
        
        \begin{definition}
            Given a joinable hypergraph $G$, the \defi{join} of $G$ at a hyperedge $h$ is the hypergraph $G' = \textrm{Join}_h(G)$ defined by replacing all vertices in $h$ with a single vertex $v$. (Recall that a 2-edge is not a hyperedge; see Figure~\ref{fig:splay-join-example}.) If all elements of $h$ are connected to a vertex $w$ in $G$, then $v$ is connected to $w$ in $G'$.
        \end{definition}
        
        If no hyperedge is specified, $\textrm{Join}(G)$ refers to the simple graph obtained by iteratively taking the join of $G$ at each of its hyperedges. Note that the order in which the hyperedges are joined into vertices does not matter, so this operation is well-defined. A slightly modified version of the join operation defines a bijection between joinable hypergraphs and vertex-weighted graphs, which makes it possible to describe most of our results in the language of weighted graphs. For more details on this process, see the appendix.

    \section{NCI-hypergraphs}\label{sec:NCI-hypergraphs}
    
        The goal of this section is to prove the following theorem, which is a complete characterization of nearly complete intersections in terms of their corresponding hypergraphs. Recall that a CI-hypergraph is a collection of disjoint (hyper)edges, and that an NCI-hypergraph is a hypergraph $G$ such that $\invert{v}{G}$ is a CI-hypergraph for every vertex $v$ of $G$.
        
        \begin{theorem}\label{thm:main}
            An ideal $I$ is a nearly complete intersection if and only if the corresponding hypergraph $G = G(I)$ satisfies the following properties:
            \begin{itemize}
                \item $G$ is joinable;
                \item The skeleton of $G$ is an NCI-graph.
            \end{itemize}
        \end{theorem}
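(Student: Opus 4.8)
The plan is to reduce everything to the per-vertex criterion recorded in the Remark: $G$ is an NCI-hypergraph exactly when $G$ is not a CI-hypergraph and $\invert{v}{G}$ is a CI-hypergraph for every vertex $v$. The engine of the proof will be a single structural description of $\invert{v}{G}$ for joinable $G$, from which a clean equivalence ``$\invert{v}{G}$ is CI $\iff$ $\invert{v}{S}$ is CI'' (here $S$ is the skeleton) will follow; granting that equivalence, both directions of the theorem fall out formally. A useful first observation is that, because the skeleton keeps the full vertex set $V$ of $G$, we always have $V(S) = V(G)$, so ``$\invert{v}{S}$ is CI for all $v$'' already quantifies over every vertex of $G$ --- this is what silently handles isolated vertices and isolated hyperedges.

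For the forward direction, assume $G = G(I)$ is an NCI-hypergraph; I would first force joinability. For (J1), suppose distinct hyperedges $e_1, e_2$ meet. If they share at least two vertices, invert at a common vertex $w$: since each $e_i$ has at least three vertices and (being distinct and minimal) they are incomparable, $e_1 \setminus \{w\}$ and $e_2 \setminus \{w\}$ survive as incomparable edges still sharing a vertex, so the inversion is not CI. If they share exactly one vertex $w$, invert instead at a vertex of $e_1$ outside $e_2$; then $e_2$ is untouched and the shrunken $e_1$ still contains $w$ together with another private vertex, again violating Lemma~\ref{lemma:CI-hypergraph}. For (J2), suppose a hyperedge $e$ has vertices $x_i, x_j$ with some $w \in N(x_i) \setminus N(x_j)$; minimality of $G$ forces $w \notin e$, so inverting at a third vertex of $e$ (available since $|e| \geq 3$) keeps the $2$-edge $\{x_i, w\}$ while shrinking $e$ to an edge still containing $x_i$, placing $x_i$ in two edges. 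Each case contradicts the NCI hypothesis, so $G$ is joinable.

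The crux is the promised description of inversion. For joinable $G$ and a vertex $v$, set $W = V \setminus (\{v\} \cup N(v))$. After deleting $v$ and minimalizing, each neighbor $u \in N(v)$ becomes a $1$-edge that absorbs every edge through $u$, so the only surviving edges of size $\geq 2$ lie in $W$ and are precisely the edges of $\induced{W}{G}$, together with the single shrunken edge $h \setminus \{v\}$ when $v$ lies in a hyperedge $h$. Using (J1)--(J2) one checks that this extra edge is disjoint from everything else (its vertices are isolated in $\induced{W}{G}$), so $\invert{v}{G}$ is CI if and only if $\induced{W}{G}$ is CI. The main obstacle is the purely hypergraph-theoretic claim that, for joinable $G$, $\induced{W}{G}$ is CI if and only if its skeleton $\induced{W}{S}$ is a matching: one direction is immediate, and for the converse I would argue that a hyperedge of $\induced{W}{G}$ possessing a neighbor inside $W$ would, via (J2), make that neighbor adjacent to all $\geq 3$ of the hyperedge's vertices and hence have degree $\geq 2$ in $\induced{W}{S}$, contradicting the matching hypothesis. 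Since the same deletion analysis applied to the graph $S$ shows $\invert{v}{S}$ is CI iff $\induced{W}{S}$ is a matching, we obtain $\invert{v}{G}$ is CI iff $\invert{v}{S}$ is CI for every $v$.

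Finally I would assemble the pieces. The matching analysis also yields that $G$ is a CI-hypergraph if and only if $S$ is (under joinability, a hyperedge sharing a vertex with a $2$-edge would force a degree-$\geq 3$ skeleton vertex, impossible if $S$ is a matching), so ``$G$ not CI'' is equivalent to ``$S$ not CI.'' Combining this with the per-vertex equivalence and the Remark gives that $G$ is NCI if and only if $G$ is joinable and both $S$ is not CI and $\invert{v}{S}$ is CI for all $v \in V(S)$, i.e. if and only if $G$ is joinable and $S$ is an NCI-graph, which is the theorem. I expect the fussiest bookkeeping to be the exact determination of which edges survive an inversion (the absorption by $1$-edges and the lone shrunken hyperedge), while the conceptual heart is the equivalence between $\induced{W}{G}$ being CI and $\induced{W}{S}$ being a matching.
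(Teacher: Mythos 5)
Your overall architecture is sound, but your joinability arguments (the forward direction) contain two genuine gaps with a common root cause. When you invert a minimal hypergraph at a vertex $v$, every edge \emph{containing} $v$ does survive minimalization (any distinct $f$ with $f\setminus\{v\}\subseteq e\setminus\{v\}$ and $v\in e$ would satisfy $f\subseteq e$, contradicting minimality of $G$), but an edge \emph{avoiding} $v$ can be swallowed by the $1$-edges created at the neighbors of $v$ --- and in both your (J1) one-shared-vertex case and your (J2) case, the witness you rely on is an edge avoiding the inverted vertex. Concretely, for (J1) take $e_1=\{a_1,a_2,w\}$ and $e_2=\{w,b_1,b_2\}$ together with all four $2$-edges $\{a_i,b_j\}$; this hypergraph is minimal and the hyperedges meet exactly in $w$. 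Your prescription is to invert at a vertex of $e_1$ outside $e_2$, say $a_1$; but that inversion produces the $1$-edges $\{b_1\}$ and $\{b_2\}$, which absorb $e_2$ and both surviving edges through $a_2$, leaving $\{\{a_2,w\},\{b_1\},\{b_2\}\}$ --- a CI-hypergraph. By symmetry the same happens at $a_2$, $b_1$, and $b_2$: the only inversion detecting the failure is at the shared vertex $w$. So ``$e_2$ is untouched'' is false, and this case genuinely requires the paper's case split (invert at the shared vertex when a cross $2$-edge exists, and at a vertex of $e_1\setminus e_2$ only when none does). Likewise for (J2): with $e=\{x_1,x_2,x_3\}$ and $2$-edges $\{x_1,w\},\{x_3,w\}$, your setup forces $x_i\in\{x_1,x_3\}$, $x_j=x_2$, and the ``third vertex'' to be $x_3$ or $x_1$; inverting there creates the $1$-edge $\{w\}$, which absorbs the edge $\{x_i,w\}$ you need, again leaving a CI-hypergraph $\{\{x_1,x_2\},\{w\}\}$. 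The repair is to invert at $x_j$ itself, the vertex of $e$ \emph{not} adjacent to $w$ (exactly the paper's choice), since then no $1$-edge at $w$ can form.

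The remainder of the proposal is correct and takes a genuinely different route from the paper. The paper proves the equivalence by showing that removing a hyperedge from an NCI-hypergraph, or adding one compatibly with joinability, preserves NCI-ness, and then inducts hyperedge by hyperedge down to (or up from) the skeleton. You instead compute $\invert{v}{G}$ for joinable $G$ outright --- $1$-edges at $N(v)$, at most one shrunken hyperedge landing in $W=V\setminus(\{v\}\cup N(v))$, plus $\induced{W}{G}$ --- and deduce the per-vertex equivalence ``$\invert{v}{G}$ is CI iff $\invert{v}{S}$ is CI,'' together with ``$G$ is CI iff $S$ is CI'' under joinability; I checked these and they hold. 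This route buys something the paper's induction quietly misses: your insistence that the skeleton retain all of $V(G)$ and that inversions be checked at \emph{every} vertex of $V(G)$ is load-bearing. Indeed, $I=(abc,xy,yz)$ is joinable and its skeleton \emph{ideal} $(xy,yz)$ is a nearly complete intersection, yet $I$ is not, since $I(a=1)=(bc,xy,yz)$ is not a complete intersection; so the theorem's second bullet must be read your way (equivalently, the skeleton may have no isolated vertices), a subtlety the paper's own converse argument glosses over. Once (J1) and (J2) are repaired as above, your proof is complete.
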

        
        We prove that NCI-hypergraphs are joinable directly, verifying conditions J1 and J2 separately. The fact that J1 holds for these hypergraphs is essentially a restatement of \cite[Lemma 4.1 and Lemma 4.3]{BoocherSeiner} using our framework. We present our own proof to illustrate how the ideas from that paper can be restated graphically. The fundamental fact needed is Lemma~\ref{lemma:commuting-operations}, which allows us to easily reduce these statements to the local level in proofs by contraposition.
        
        \begin{lemma}\label{lemma:nci-joinable-axiom-1}
            Let $G$ be an NCI-hypergraph. Then no two hyperedges of $G$ intersect.
        \end{lemma}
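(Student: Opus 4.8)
The plan is to argue by contraposition, using Lemma~\ref{lemma:commuting-operations} to reduce everything to a small local configuration. First I would record the \emph{reduction principle} that NCI-hypergraphs are closed under passing to induced sub-hypergraphs. Indeed, if $G$ is an NCI-hypergraph and $V'\subseteq V$, then for every $v\in V'$ Lemma~\ref{lemma:commuting-operations} gives $\invert{v}{\induced{V'}{G}} = \induced{V'}{\invert{v}{G}}$; since $\invert{v}{G}$ is a disjoint union of (hyper)edges, so is any induced sub-hypergraph of it, and hence $\invert{v}{\induced{V'}{G}}$ is a CI-hypergraph for all $v\in V'$, making $\induced{V'}{G}$ an NCI-hypergraph. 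Contrapositively, it suffices to produce a single induced sub-hypergraph of $G$ that fails to be an NCI-hypergraph.

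Now suppose, for contradiction, that two hyperedges $e_1,e_2$ of $G$ share a vertex. Since $G$ is minimal, neither hyperedge contains the other, so I may fix vertices $a\in e_1\setminus e_2$, $b\in e_2\setminus e_1$, and $w\in e_1\cap e_2$. The goal is to invert at a single well-chosen vertex $v$ so that the images of $e_1$ and $e_2$ still share a vertex and both remain minimal; this exhibits a vertex lying in two (hyper)edges, so the inversion is not a CI-hypergraph and $G$ is not an NCI-hypergraph. A useful observation is that $e_1\setminus\{v\}$ is automatically a minimal edge of $\invert{v}{G}$ for every $v\in e_1$: any edge strictly inside it would, upon restoring $v$, be a proper subedge of $e_1$, contradicting minimality of $e_1$. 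The choice of $v$ then splits into cases. If $|e_1\setminus e_2|\geq 2$, invert at $a$: the image $e_1\setminus\{a\}$ still contains $w$ and a second vertex $a'\in e_1\setminus e_2$, while $e_2$ is unchanged and contains $b$, so the two images meet at $w$ and are incomparable (hence distinct minimal edges, provided the image of $e_2$ is itself minimal). The case $|e_2\setminus e_1|\geq 2$ is symmetric, and if both differences are singletons then $|e_1\cap e_2|\geq 2$, so inverting at a shared vertex keeps a shared vertex together with the incomparability witnesses $a$ and $b$.

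The step I expect to be the main obstacle is guaranteeing that the collision survives minimalization: although $e_1\setminus\{v\}$ is always minimal, the image of $e_2$ need not be, because an \emph{interfering} edge lying across $e_1$ and $e_2$ can undercut it and thereby resolve the collision for that particular $v$. For instance, for the hypergraph on edges $\{x_1,x_2,x_3\}$, $\{x_3,x_4,x_5\}$, $\{x_1,x_4,x_5\}$, inverting at $x_1$ produces the CI-hypergraph with edges $\{x_2,x_3\}$ and $\{x_4,x_5\}$, so $x_1$ is a blocked choice, whereas inverting at $x_2$ does expose two edges meeting at $x_3$. Thus the real content is to select an inversion vertex that cannot be blocked. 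I would handle this by passing to a minimal counterexample with vertex set $e_1\cup e_2$ and analyzing the edges $g$ that would have to block a candidate $v\in e_1\setminus e_2$; each such $g$ must satisfy $g\cap(e_1\setminus e_2)=\{v\}$ and $g\setminus\{v\}\subsetneq e_2$, and I would show these forced edges cannot block every candidate simultaneously, which is the graphical translation of \cite[Lemmas 4.1 and 4.3]{BoocherSeiner}.
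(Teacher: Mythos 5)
Your reduction principle, your observation that $e_1\setminus\{v\}$ survives minimalization whenever $v\in e_1$, and your final case (both differences singletons, hence $|e_1\cap e_2|\geq 2$, invert at a shared vertex) are all correct, and that last case coincides with the paper's first case. The gap is in your cases $|e_1\setminus e_2|\geq 2$ and $|e_2\setminus e_1|\geq 2$: they rest on a deferred claim that is in fact false. When $e_1\cap e_2=\{w\}$ is a single vertex, the blocking edges \emph{can} block every candidate $v\in e_1\setminus e_2$ simultaneously. Concretely, write $e_1=(a_1,\dots,a_n,w)$ and $e_2=(b_1,\dots,b_m,w)$ and add the 2-edges $(a_i,b_1)$ for every $1\leq i\leq n$; this hypergraph is minimal, and inverting at any $a_i$ turns $(a_i,b_1)$ into the 1-edge $(b_1)$, which undercuts $e_2$ and every other edge through $b_1$, leaving the disjoint pair $e_1\setminus\{a_i\}$ and $(b_1)$ --- a CI-hypergraph. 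So every candidate in $e_1\setminus e_2$ is blocked, and no analysis restricted to those candidates for the fixed pair $(e_1,e_2)$ can close the argument. (Hyperedge blockers do the same: $e_1=(x_1,x_2,x_3)$, $e_2=(x_3,x_4,x_5)$ together with $(x_1,x_4,x_5)$ and $(x_2,x_4,x_5)$ blocks $x_1$, $x_2$, and even the shared vertex $x_3$.) A smaller nit: your reduction principle should conclude that induced sub-hypergraphs of NCI-hypergraphs are NCI \emph{or} CI; what your contrapositive really needs is an induced sub-hypergraph $H$ and a vertex $v$ with $\invert{v}{H}$ not a CI-hypergraph.

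The missing idea, which is how the paper closes exactly this case, is to stop trying to out-select the blockers and instead use them as witnesses. First dispose, for \emph{all} pairs of hyperedges at once, of the case where some pair meets in at least two vertices (your shared-vertex argument, which can never be blocked); this in particular eliminates hyperedge blockers, since a hyperedge $g$ with $g\setminus\{v\}\subsetneq e_2$ meets $e_2$ in at least two vertices. Then, inside $H=\induced{e_1\cup e_2}{G}$ with $e_1\cap e_2=\{w\}$, the only possible edges besides $e_1$ and $e_2$ are 2-edges of the form $(a_i,b_j)$, because a third hyperedge inside $e_1\cup e_2$ would meet $e_1$ or $e_2$ in two vertices, and a 2-edge through $w$ or inside $e_1$ or $e_2$ would violate minimality. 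If such a 2-edge exists, invert at the \emph{shared} vertex $w$: then $e_1\setminus\{w\}$, $e_2\setminus\{w\}$, and all 2-edges survive minimalization, and $e_1\setminus\{w\}$ collides with $(a_i,b_j)$ at $a_i$, so the would-be blocker is precisely what certifies that $\invert{w}{H}$ is not CI. Only when $H$ has no 2-edges at all do you invert at a vertex of $e_1\setminus e_2$, and then nothing remains that could block $e_2$. Reorganizing your cases this way --- by intersection size and then by presence of 2-edges, rather than by the sizes of the set differences --- turns your outline into a complete proof, and it is the paper's.
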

            
            \begin{proof}
                Let $G$ be a simple hypergraph in which two hyperedges $e_1$ and $e_2$ intersect, and let $H = \induced{e_1\cup e_2}{G}$ be the induced subhypergraph on all elements of $e_1$ and $e_2$. The proof breaks into three cases. First, suppose that the intersection $e_1\cap e_2$ contains at least two vertices $v$ and $w$. If we invert $H$ at $v$, the resulting hypergraph must contain both $e_1\setminus v$ and $e_2\setminus v$, which intersect at $w$ (see Figure~\ref{fig:lem-nci-j1a}). This implies that $\invert{v}{H}$ is not a CI-hypergraph, and by Lemma~\ref{lemma:commuting-operations} this graph is identical to $\induced{e_1\cup e_2}{\invert{v}{G}}$, which is a subgraph of $\invert{v}{G}$. It follows that $G$ cannot be an NCI-hypergraph in this case (see Figure~\ref{fig:lem-nci-j1a}).
                
                Now, suppose instead that $e_1$ and $e_2$ intersect in a single vertex $v$---let $e_1 = (a_1,...,a_n, v)$ and $e_2 = (b_1,...,b_m, v)$. By the previous case we know that $e_1$ and $e_2$ must be the only hyperedges in $H$, and also that $v$ cannot be an element of any edge in $H$. In the case where there are edges in $H$, we can relabel vertices so that we have the edge $(a_1, b_1)$ and then invert $H$ at $v$. We are left with a hypergraph containing the hyperedge $e_1\setminus v$ and edge $(a_1, b_1)$ (among other things), which is not the hypergraph of a complete intersection. If there are no edges in $H$, then inverting $H$ at $a_1$ yields a graph with hyperedges $e_1\setminus a_1$ and $e_2$, which does not represent a complete intersection either.  Applying Lemma~\ref{lemma:commuting-operations} again proves that $G$ cannot be an NCI-hypergraph (see Figure~\ref{fig:lem-nci-j1b}). \end{proof}
                
                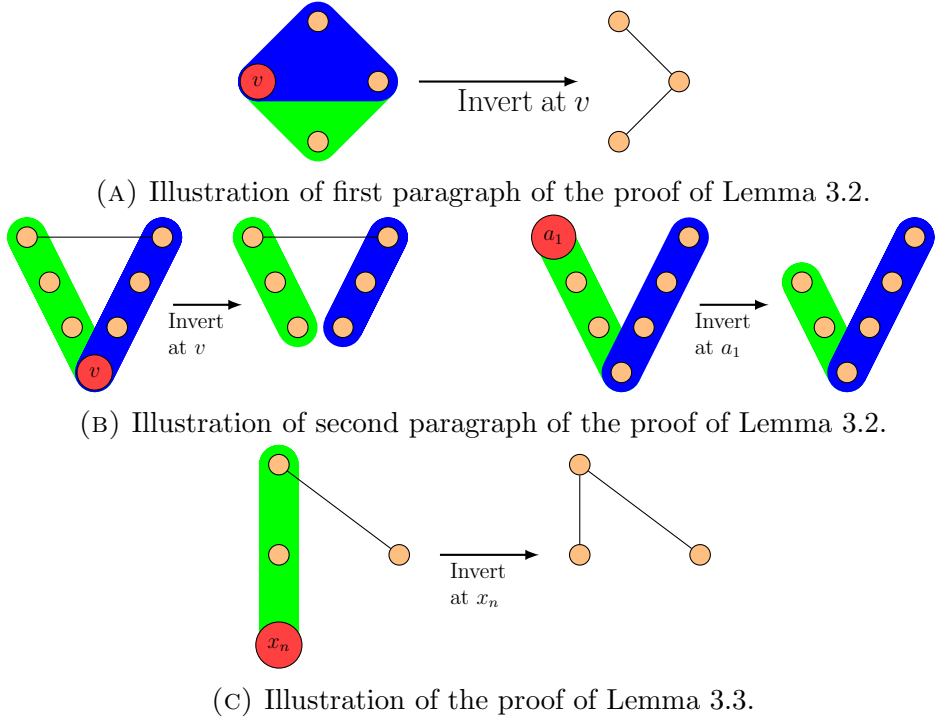
\begin{figure}
                \centering
                \begin{subfigure}{\textwidth}
                \centering
                    \begin{tikzpicture}
                        [scale=.4,auto=left,every node/.style={circle,fill=orange!50, scale=0.7, draw=black},every loop/.style={}]
                        \node (n1) at (2, 0) {};
              		    \node[fill=red!75] (n2) at (0, 2) {$v$};
              		    \node (n3) at (2, 4) {};
              		    \node (n4) at (4, 2) {};
              		    \node[fill=none, draw=none] (1AL) at (5, 2) {};
              		    
              		    \begin{pgfonlayer}{background}
              		    \begin{scope}[transparency group, opacity=0.5]
                        \fill[edge,opacity=1,color=green, line width=15pt] (n2.center) -- (n1.center) -- (n4.center) -- (n2.center);
                        \end{scope}
              		    \begin{scope}[transparency group, opacity=0.5]
                         \fill[edge,opacity=1,color=blue, line width=15pt] (n2.center) -- (n3.center) -- (n4.center) -- (n2.center);
                        \end{scope}
                        \end{pgfonlayer}
                        
              		    \node (n1) at (2 + 10, 0) {};
              		    \node (n3) at (2 + 10, 4) {};
              		    \node (n4) at (4 + 10, 2) {};
              		    \node[fill=none, draw=none] (1AR) at (11, 2) {};

              		    \draw (n3) -- (n4);
              		    \draw (n1) -- (n4);
                    
                        \draw[-latex,thick] (1AL) -- (1AR)
                        node[below,text width=5cm,fill=none,draw=none,rectangle]{ \Large Invert at $v$};
                        \end{tikzpicture}
                    \subcaption{Illustration of first paragraph of the proof of Lemma~\ref{lemma:nci-joinable-axiom-1}.}
                    \label{fig:lem-nci-j1a}
                \end{subfigure}
                        
                \begin{subfigure}{\textwidth}
                    \begin{tikzpicture}
                		[scale=.3,auto=left,every node/.style={circle,fill=orange!50, scale=0.7, draw=black},every loop/.style={}]
                		\node (n1) at (0, 8) {};
                  		\node (n2) at (1, 6) {};
                  		\node (n3) at (2, 4) {};
                  		\node[fill=red!75] (n4) at (3, 2) {$v$};
                  		\node (n5) at (4, 4) {};
                  		\node (n6) at (5, 6) {};
                  		\node (n7) at (6, 8) {};
                  		\node[fill=none, draw=none] (1AL) at (6, 5) {};
                  	
                  		\draw (n1) -- (n7);
                  		
                  		\begin{pgfonlayer}{background}
                        \begin{scope}[transparency group, opacity=0.5]
                        \fill[edge,opacity=1,color=green, line width=15pt] (n1.center) -- (n2.center) -- (n3.center) -- (n4.center) -- (n1.center);
                        \end{scope}
                        \begin{scope}[transparency group, opacity=0.5]
                        \fill[edge,opacity=1,color=blue, line width=15pt] (n7.center) -- (n6.center) -- (n5.center) -- (n4.center) -- (n7.center);
                        \end{scope}
                        \end{pgfonlayer}
                        
                        \node (m1) at (0 + 10, 8) {};
                  		\node (m2) at (1 + 10, 6) {};
                  		\node (m3) at (2 + 10, 4) {};
                  	
                  		\node (m5) at (4 + 10, 4) {};
                  		\node (m6) at (5 + 10, 6) {};
                  		\node (m7) at (6 + 10, 8) {};
                  		\node[fill=none, draw=none] (1AR) at (10, 5) {};
                  	
                  		\draw (m1) -- (m7);
                  		
                  		\begin{pgfonlayer}{background}
                        \begin{scope}[transparency group, opacity=0.5]
                        \fill[edge,opacity=1,color=green, line width=15pt] (m1.center) -- (m2.center) -- (m3.center) -- (m1.center);
                        \end{scope}
                        \begin{scope}[transparency group, opacity=0.5]
                        \fill[edge,opacity=1,color=blue, line width=15pt] (m7.center) -- (m6.center) -- (m5.center) -- (m7.center);
                        \end{scope}
                        \end{pgfonlayer}
                        
                        \draw[-latex,thick] (1AL) -- (1AR)
                        node[midway,below,text width=1.5cm,fill=none,draw=none,rectangle]{ Invert at $v$};
                	\end{tikzpicture}{\hspace{1.5cm}}
                  	\begin{tikzpicture}
                		[scale=.3,auto=left,every node/.style={circle,fill=orange!50, scale=0.7, draw=black},every loop/.style={}]
                		\node[fill=red!75] (n1) at (0, 8) {$a_1$};
                  		\node (n2) at (1, 6) {};
                  		\node (n3) at (2, 4) {};
                  		\node (n4) at (3, 2) {};
                  		\node (n5) at (4, 4) {};
                  		\node (n6) at (5, 6) {};
                  		\node (n7) at (6, 8) {};
                  		\node[fill=none, draw=none] (1AL) at (6, 5) {};
                  		
                  		\begin{pgfonlayer}{background}
                        \begin{scope}[transparency group, opacity=0.5]
                        \fill[edge,opacity=1,color=green, line width=15pt] (n1.center) -- (n2.center) -- (n3.center) -- (n4.center) -- (n1.center);
                        \end{scope}
                        \begin{scope}[transparency group, opacity=0.5]
                        \fill[edge,opacity=1,color=blue, line width=15pt] (n4.center) -- (n5.center) -- (n6.center) -- (n7.center) -- (n7.center);
                        \end{scope}
                        \end{pgfonlayer}
                        
                  		\node (m2) at (1 + 10, 6) {};
                  		\node (m3) at (2 + 10, 4) {};
                  		\node (m4) at (3 + 10, 2) {};
                  		\node (m5) at (4 + 10, 4) {};
                  		\node (m6) at (5 + 10, 6) {};
                  		\node (m7) at (6 + 10, 8) {};
                  		\node[fill=none, draw=none] (1AR) at (10, 5) {};
                  		
                  		\begin{pgfonlayer}{background}
                        \begin{scope}[transparency group, opacity=0.5]
                        \fill[edge,opacity=1,color=green, line width=15pt] (m2.center) -- (m3.center) -- (m4.center) -- (m2.center);
                        \end{scope}
                        \begin{scope}[transparency group, opacity=0.5]
                        \fill[edge,opacity=1,color=blue, line width=15pt] (m7.center) -- (m6.center) -- (m5.center) -- (m4.center) -- (m7.center);
                        \end{scope}
                        \end{pgfonlayer}
                        
                        \draw[-latex,thick] (1AL) -- (1AR)
                        node[midway,below,text width=1.5cm,fill=none,draw=none,rectangle]{ Invert at $a_1$};
            	    \end{tikzpicture}
        	    \subcaption{Illustration of second paragraph of the proof of Lemma~\ref{lemma:nci-joinable-axiom-1}.}
        	    \label{fig:lem-nci-j1b}
        	\end{subfigure}
        	
        	\begin{subfigure}{\textwidth}
        	\centering
        	    \begin{tikzpicture}
              		[scale=.4,auto=left,every node/.style={circle,fill=orange!50, scale=0.7, draw=black},every loop/.style={}]
              		\node (n1) at (0, 6) {};
              	    \node (n2) at (0, 3) {};
              		\node[fill=red!75] (n3) at (0, 0) {$x_n$};
              		\node (n4) at (4, 3) {};
              	    \node[fill=none, draw=none] (l) at (5, 3) {};
              	
              		\draw (n1) -- (n4);
              		
              		\begin{pgfonlayer}{background}
                    \begin{scope}[transparency group, opacity=0.5]
                    \fill[edge,opacity=1,color=green, line width=15pt] (n1.center) -- (n2.center) -- (n3.center) -- (n1.center);
                    \end{scope}
                    \end{pgfonlayer}

              		\node (m1) at (0 + 10, 6) {};
              	    \node[fill=none, draw=none] (r) at (0 + 9, 3) {};
              		\node (m3) at (0 + 10, 3) {};
              		\node (m4) at (4 + 10, 3) {};
              	
              		\draw (m1) -- (m4);
              		\draw (m1) -- (m3);
              		
              		\draw[-latex,thick] (l) -- (r)
                    node[midway,below,text width=1.5cm,fill=none,draw=none,rectangle]{ Invert at $x_n$};
          	    \end{tikzpicture}
            \subcaption{Illustration of the proof of Lemma~\ref{lemma:nci-joinable-axiom-2}.}
            \label{fig:lem-nci-j2}
        	\end{subfigure}
        	\caption{Proof illustrations.}
            \end{figure}

        Condition J2 (elements of hyperedges have the same 2-neighbors) is shown to be necessary in identical fashion:
        
        \begin{lemma}\label{lemma:nci-joinable-axiom-2}
            If $H$ is an NCI-hypergraph and $e = (x_1,...,x_n)$ is a hyperedge of $H$, then $N(x_1)=...=N(x_n)$.
        \end{lemma}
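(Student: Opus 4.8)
My plan is to prove the contrapositive, following the template of Lemma~\ref{lemma:nci-joinable-axiom-1} and its Figure~\ref{fig:lem-nci-j2}. Assume the $2$-neighbor sets are not all equal; after relabelling the vertices of $e$ I may assume there is a vertex $w$ of $H$ with $w \in N(x_1)$ but $w \notin N(x_2)$, so that $\{x_1,w\}$ is an edge of $H$ while $\{x_2,w\}$ is not. By the Remark identifying NCI-hypergraphs with hypergraphs all of whose vertex inversions are CI-hypergraphs, it is enough to produce a single vertex $u$ for which $\invert{u}{H}$ fails to be a CI-hypergraph, i.e. for which some vertex lies in two distinct (hyper)edges (Lemma~\ref{lemma:CI-hypergraph}).

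To keep the argument local I would first restrict attention to $K = \induced{e \cup \{w\}}{H}$. Lemma~\ref{lemma:nci-joinable-axiom-1} guarantees that $e$ is the only hyperedge of $H$ meeting $e$, so $e$ is the unique hyperedge of $K$; moreover $\{x_1,w\}$ is an edge of $K$ while $\{x_2,w\}$ is not. Since restricting a collection of pairwise disjoint (hyper)edges keeps them disjoint, any induced subhypergraph of a CI-hypergraph is again a CI-hypergraph; combined with Lemma~\ref{lemma:commuting-operations}, which gives $\invert{u}{K} = \induced{e\cup\{w\}}{\invert{u}{H}}$, this shows $\invert{u}{K}$ is a CI-hypergraph for every $u$ whenever $H$ is an NCI-hypergraph. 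Hence it suffices to exhibit one vertex $u$ of $K$ for which $\invert{u}{K}$ is not a CI-hypergraph.

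The vertex to invert at is $x_2$. Restricting to $(e\cup\{w\})\setminus\{x_2\}$ collapses the hyperedge $e$ to $e\setminus\{x_2\}$, a set of size $n-1\ge 2$ that still contains $x_1$, while leaving the edge $\{x_1,w\}$ untouched. Both of these persist in $\minimalsub$: because $w \notin e$ we have $\{x_1,w\}\not\subseteq e\setminus\{x_2\}$, and because $K$ is a minimal hypergraph no edge of $K$ is a subedge of the hyperedge $e$, so $e\setminus\{x_2\}$ is absorbed by nothing. Thus $x_1$ lies in two distinct edges of $\invert{x_2}{K}$, which by Lemma~\ref{lemma:CI-hypergraph} is therefore not a CI-hypergraph, completing the contraposition. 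I expect the only delicate point to be this final minimality bookkeeping---confirming that neither the shrunken hyperedge nor the edge $\{x_1,w\}$ is swallowed after the restriction---and I note that the argument is uniform in $n$, since the case $n=3$ (where $e\setminus\{x_2\}$ becomes an ordinary $2$-edge) and the case $n\ge 4$ (where it remains a hyperedge) are dispatched at once by Lemma~\ref{lemma:CI-hypergraph}.
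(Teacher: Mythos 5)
Your proposal is correct and takes essentially the same route as the paper's own proof: restrict to the induced subhypergraph on $e \cup \{w\}$, invert at the vertex of $e$ not adjacent to $w$, observe that $x_1$ then lies in both the shrunken hyperedge and the edge $\{x_1,w\}$, and lift the failure back to $H$ via Lemma~\ref{lemma:commuting-operations}. Your additional steps---citing Lemma~\ref{lemma:nci-joinable-axiom-1} to pin down the edge set of $K$ and checking that neither surviving edge is absorbed by $\minimalsub$---merely make explicit bookkeeping that the paper leaves implicit.
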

            
            \begin{proof}
                Let $G$ be a simple hypergraph, $e = (x_1,...,x_n)$ a hyperedge of $G$, and $v$ a vertex of $G$ connected to $x_1$ by an edge but not to $x_n$. Let $H = \induced{e\cup\{v\}}{G}$ be the subhypergraph induced by $\{x_1,...,x_n, v\}$ and consider its inversion $\invert{x_n}{H}$ at $x_n$ (see Figure~\ref{fig:lem-nci-j2}).
        	    
                The resulting hypergraph must contain both $(x_1, v)$ and $(x_1,...,x_{n-1})$ and thus does not represent a complete intersection. Lemma~\ref{lemma:commuting-operations} then implies that the inversion of $G$ at $x_n$ does not represent a complete intersection either, so $G$ cannot be an NCI-hypergraph. 
            \end{proof}
        
        To prove Theorem~\ref{thm:main}, it now suffices to show that if $G = (V, E)$ is a joinable NCI-hypergraph, then adding or removing a hyperedge from $G$ gives another NCI-hypergraph. The proof of each case here is brief.
        
        \begin{proof}[Proof of Theorem~\ref{thm:main}]
        
        First, let $e$ be a hyperedge of $G$. Since $G$ is joinable, only 2-edges intersect with $e$. The cardinality of $e$ is still at least 2 in the inversion $\invert{v}{G}$ for any $v\in V$, so $e$ does not delete any elements of $E$ when Min is applied. It follows that $\invert{v}{G-\{e\}}$ is a subgraph of $\invert{v}{G}$, which implies that $\invert{v}{G-\{e\}}$ represents a complete intersection for all $v\in V$. Thus $G-\{e\}$ is an NCI-hypergraph as desired.
        
        In the other direction, suppose that $W = (v_1,...,v_n)\subset V$ is an independent set of vertices ($n\geq 3$) such that $G' = (V, E\cup\{W\})$ is still a joinable hypergraph. Since $N(v_1)=...=N(v_n)$ and $G$ is an NCI-hypergraph by assumption, all of the $v_i$ must become isolated after inversion at any $v\in V$. This implies that in $\invert{v}{G'}$ the $v_i$ are either isolated as in $G$ or grouped into the single hyperedge $W$. In either case every vertex is contained in at most one $k$-edge in this inversion, so $G'$ is still an NCI-hypergraph.
        
        The proof of Theorem~\ref{thm:main} follows quickly now. If $G$ is an NCI-hypergraph, then it must be joinable by Lemmas \ref{lemma:nci-joinable-axiom-1} and \ref{lemma:nci-joinable-axiom-2}, 
        and the above discussion implies that its skeleton must also be an NCI-hypergraph, so $G$ satisfies the hypotheses of Theorem~\ref{thm:main}. Conversely, if $G$ satisfies the hypotheses of the theorem then its skeleton is an NCI-hypergraph by assumption, and we can add on its hyperedges one by one without changing this fact.
        \end{proof}
        
        Given an arbitrary ideal $I$, it is fairly easy to check whether or not $G(I)$ is a joinable hypergraph, so the utility of Theorem~\ref{thm:main} hinges entirely on our ability to identify nearly complete intersections generated in degree 2. In \cite{StoneMiller}, Stone and Miller provide a characterization of the graphs of these ideals using two small forbidden subgraphs, completing our description of NCI-hypergraphs. We close this section with a lemma describing the effect of the join operation on NCI-hypergraphs, which will be useful later.
        
        \begin{lemma}\label{lemma:join-preserves-nci}
            Let $G = (V, E)$ be an NCI-hypergraph and let $h \in E$ be a hyperedge. Then $\join{h}{G}$ is an NCI-hypergraph or a CI-hypergraph.
            
            \begin{proof}
            Let $v \in h$, and let $V' = (V \backslash h) \cup v$. Then $\join{h}{G} \cong \induced{V'}{G}$. We wish to show that $\invert{v}{\induced{V'}{G}}$ is a CI-hypergraph. By the commutativity of vertex inversion with taking induced sub-hypergraphs, we have that $\invert{w}{\induced{V'}{G}} = \induced{V'}{\invert{w}{G}}$ for any $w \in V'$. Since $G$ is an NCI-hypergraph, $\invert{w}{G}$ is always a CI-hypergraph, meaning $\induced{V'}{\invert{w}{G}}$ is also a CI-hypergraph. Hence, inverting at any vertex in $\join{h}{G}$ yields a CI-hypergraph. Thus, $\join{h}{G}$ must be an NCI-hypergraph or a CI-hypergraph.
            \end{proof}
        \end{lemma}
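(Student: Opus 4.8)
The plan is to recognize the join at $h$ as an induced sub-hypergraph operation, after which the result falls out of Lemma~\ref{lemma:commuting-operations}. Since $G$ is an NCI-hypergraph, Lemmas~\ref{lemma:nci-joinable-axiom-1} and \ref{lemma:nci-joinable-axiom-2} guarantee that $G$ is joinable. I would fix a vertex $v \in h$ and set $V' = (V \setminus h) \cup \{v\}$, which is precisely the vertex set of $\join{h}{G}$. The heart of the argument is the claim that $\join{h}{G} \cong \induced{V'}{G}$, identifying the collapsed vertex of the join with $v$.

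Establishing this isomorphism is the main obstacle, and it is exactly where joinability is used. By axiom (J2), every vertex of $h$ shares the 2-neighbor set $N(v)$, so the edges $\{v, w\}$ retained in $\induced{V'}{G}$ (those with $w \notin h$) are precisely the edges the join attaches to the collapsed vertex; the same axiom rules out any 2-edge internal to $h$, so no self-loop appears. By axiom (J1), $h$ meets no other hyperedge, so restricting to $V'$ leaves every edge and hyperedge disjoint from $h$ untouched. Finally, $h$ itself vanishes in both constructions --- it is not contained in $V'$, and collapsing its vertices leaves a single point. I would verify these correspondences edge-by-edge to confirm that the two hypergraphs agree.

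With the isomorphism in hand, the remaining steps are routine. For any vertex $w \in V'$, Lemma~\ref{lemma:commuting-operations} gives $\invert{w}{\induced{V'}{G}} = \induced{V'}{\invert{w}{G}}$. Because $G$ is an NCI-hypergraph, $\invert{w}{G}$ is a CI-hypergraph, i.e.\ a disjoint collection of (hyper)edges; passing to an induced sub-hypergraph only deletes edges and vertices and so cannot place any vertex in two edges, whence $\induced{V'}{\invert{w}{G}}$ is again a CI-hypergraph. Thus every vertex inversion of $\join{h}{G}$ produces a CI-hypergraph.

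To conclude, I would split on whether $\join{h}{G}$ is itself a CI-hypergraph: if so we are done, and if not, then the fact that all of its vertex inversions are CI-hypergraphs is exactly the definition of an NCI-hypergraph. Either way $\join{h}{G}$ is an NCI-hypergraph or a CI-hypergraph, as claimed.
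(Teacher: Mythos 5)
Your proposal is correct and follows essentially the same route as the paper's proof: identify $\join{h}{G}$ with $\induced{V'}{G}$ for $V' = (V \setminus h) \cup \{v\}$, then apply Lemma~\ref{lemma:commuting-operations} together with the fact that induced sub-hypergraphs of CI-hypergraphs are CI. In fact you supply more detail than the paper does, which simply asserts the isomorphism $\join{h}{G} \cong \induced{V'}{G}$, whereas you justify it via joinability (J1)/(J2).
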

    
    \section{Resolutions of NCIs}\label{sec:NCIres}
    
        Using the established structure of NCI-hypergraphs, we can now extract information about their Betti numbers and minimal free resolutions. We explicitly describe the effect of the generators of degree $3$ and higher on the Betti table of a nearly complete intersection. In this section, we gently abuse terminology by referring to the skeleton of a nearly complete intersection ideal $I$; by this we mean the degree 2 squarefree monomial ideal corresponding to the skeleton of the induced hypergraph $G(I)$.
        
        Given the minimal free resolution of the skeleton of a nearly complete intersection, the minimal free resolution of the entire nearly complete intersection is obtained through the use of an iterated mapping cone construction. To prove the minimality of this constructed resolution, it suffices to show that each step is a Betti splitting of our ideal as defined in \cite{Monica}.
        We take this opportunity to remind the reader that the Betti numbers of $I$ are slightly offset from the Betti numbers of $R/I$, with $\beta_{ij}(I) = \beta_{i+1\, j}(R/I)$.
        
        \begin{definition}[from \cite{Monica}]
         $I = J+K$ is a \defi{Betti splitting} if for all $i, j\geq 0$ we have
            $$\beta_{ij}(I) = \beta_{ij}(J)+\beta_{ij}(K)+\beta_{i-1, j}(J\cap K)$$    
        \end{definition}
        
        Note that if $I$ is generated by $\{m_1,...,m_n\}$, any bipartition of the $m_i$ provides a candidate decomposition $J+K$. From the definition, it is not immediately obvious how to check if a given partition yields a Betti splitting without explicitly computing the Betti tables. Fortunately, Eliahou and Kervaire provided a more applicable condition for proving a partition is a Betti splitting in the ungraded case \cite{KE-Splittings}. This result was extended by Fatabbi to show that such Betti splittings respect graded Betti numbers \cite{FatPoints}. Let $\mathcal{G}(I)$ represent the generators of $I$.
        
        \begin{definition}[from \cite{KE-Splittings}]
            We say that $J+K = I$ is a \defi{splitting} of $I$ if $\mathcal{G}(J)$ and $\mathcal{G}(K)$ form a partition of $\mathcal{G}(I)$ and there exists a splitting function $\mathcal{G}(J\cap K)\rightarrow \mathcal{G}(J)\times \mathcal{G}(K)$ sending $w$ to $(\phi(w), \psi(w))$ and satisfying the following two properties:\\
            \begin{enumerate}
                \item[(S1)] For all $w\in \mathcal{G}(J\cap K)$, $w = \lcm(\phi(w), \psi(w))$.
                \item[(S2)] For every subset $G'\subset \mathcal{G}(J\cap K)$, both $\lcm(\phi(G'))$ and $\lcm(\psi(G'))$ strictly divide $\lcm(G')$.
            \end{enumerate}
        \end{definition}
        
        Defining an appropriate splitting function is more feasible than attempting to explicitly compute free resolutions of arbitrary nearly complete intersections. We prove that any nearly complete intersection $I$ with at least one generator $h$ of degree at least $3$ has a Betti splitting $I = (h) + K$. Applying this decomposition iteratively allows us to break the Betti table of $I$ down into a sum of three sets of tables: the table of the skeleton of $I$, trivial tables coming from each principal ideal $(h)$, and tables of intersection ideals $(h)\cap K$. We also prove that $(h)\cap K$ is a multiple of a complete intersection.
        
        \begin{theorem}\label{thm:hyperedge-betti-splitting}
            Let $I = (h) + K$ be a nearly complete intersection where $h$ is a monomial of degree at least 3. Then $(h)+K$ is a Betti splitting of $I$.
        \end{theorem}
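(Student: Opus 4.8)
The plan is to verify the Eliahou--Kervaire splitting criterion (S1)--(S2) for the partition $\mathcal{G}(J) = \{h\}$ and $\mathcal{G}(K) = \mathcal{G}(I)\setminus\{h\}$; by the cited results of Eliahou--Kervaire and Fatabbi, such a splitting is automatically a Betti splitting respecting the graded Betti numbers. Because $J = (h)$ is principal, the splitting function $\mathcal{G}(J\cap K)\to\mathcal{G}(J)\times\mathcal{G}(K)$ is forced to have first coordinate $\phi\equiv h$, so the entire burden falls on choosing the second coordinate $\psi$ correctly.

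First I would describe the minimal generators of $(h)\cap K$ explicitly, leaning on joinability. Write $h = x_1\cdots x_n$ with $n\geq 3$. I claim that any generator $m$ of $K$ sharing a variable with $h$ must be a $2$-edge $\{x_j, w\}$ with $w\notin h$: it cannot be a hyperedge by axiom (J1) of Theorem~\ref{thm:main} (hyperedges of an NCI-hypergraph are pairwise disjoint), it cannot be a $1$-edge $\{x_j\}$ without rendering $h$ redundant, and it cannot be a $2$-edge inside $h$ since the hypergraph is minimal. Axiom (J2) then forces $w$ to be a common neighbor of all of $x_1,\dots,x_n$, so each of $\{x_1,w\},\dots,\{x_n,w\}$ lies in $\mathcal{G}(K)$, and $\lcm(h,\{x_j,w\}) = h\cdot w$ independently of $j$. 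Hence every minimal generator of $(h)\cap K$ has one of two shapes: $h\cdot w$ with $w$ a common neighbor of the $x_i$, or $h\cdot m$ with $m\in\mathcal{G}(K)$ disjoint from $h$.

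With this classification I would define $\phi\equiv h$, $\psi(h\cdot w) = \{x_1,w\}$, and $\psi(h\cdot m) = m$. Property (S1) is immediate since $\lcm(h,\{x_1,w\}) = h\cdot w$ and $\lcm(h,m) = h\cdot m$. The $\phi$-half of (S2) is routine: every element of $\mathcal{G}(J\cap K)$ is a strict multiple of $h$ (no generator of $K$ divides $h$, else $h$ would not be a minimal generator), so $h = \lcm(\phi(G'))$ strictly divides $\lcm(G')$ for every nonempty $G'$.

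The crux---and the only step I expect to present real difficulty---is the $\psi$-half of (S2). Here I would observe that $\lcm(\psi(G'))$ strictly divides $\lcm(G') = \lcm(h,\psi(G'))$ exactly when $h$ fails to divide $\lcm(\psi(G'))$. This is where routing consistently through the single fixed vertex $x_1$ is essential: by construction every image $\psi(g)$ contains at most the one vertex $x_1$ from $h$ (first-type edges contribute $x_1$, second-type generators contribute no vertex of $h$), so $\lcm(\psi(G'))$ is never divisible by $x_2$, and therefore not by $h$. Without this uniform choice one could accumulate all of $x_1,\dots,x_n$ across different images and destroy (S2), so the subtlety is entirely in committing to $x_1$. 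The one point to check carefully is that this choice is legitimate, i.e.\ that $\{x_1,w\}$ is genuinely a generator of $K$ realizing $h\cdot w$; this follows because (J2) guarantees $\{x_1,w\}$ is an edge whenever $w$ is a common neighbor, and it is distinct from $h$. Establishing (S2) completes the verification and hence the theorem.
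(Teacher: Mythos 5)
Your proposal is correct and takes essentially the same approach as the paper's proof: both use joinability (J1/J2) to classify the generators of $(h)\cap K$ into the two shapes $h\cdot w$ and $h\cdot m$, define the Eliahou--Kervaire splitting function with $\phi\equiv h$ and $\psi$ routed through a single fixed vertex of $h$, and then verify (S1)--(S2). Your phrasing of the $\psi$-half of (S2) (that $h\nmid\lcm(\psi(G'))$ because $x_2$ never appears) is just a restatement of the paper's observation that $\lcm(G')$ is the same lcm with the fixed vertex $x$ replaced by all of $h$, though you make two points more explicit than the paper does: the strictness of the $\phi$-division and the fact that (J2) guarantees $\{x_1,w\}$ really is a generator of $K$.
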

            
            \begin{proof}
                If $K$ is generated by $(m_1,...,m_n)$, then a (not necessarily minimal) set of generators of $(h)\cap K$ is given by $\{\lcm(hm_i)\}_{i=1}^n$. From our knowledge of the hypergraph structure of $G(I)$, we know that each $m_i$ has one of two forms: either it can be written as $xy$, where $x$ lies in $h$ and $y$ is connected to every vertex of $h$, or $m_i = e$ represents a (hyper)edge of $G(I)$ that does not intersect $h$. In the first case, $\lcm(h,m_i) = \lcm(h,xy) = hy$, while in the second case $\lcm(h,m_i) = \lcm(h,e) = he$. Some of these generators may generate each other, in which case we remove those terms to get a minimal generating set where all generators are of one of these forms.
                
                We can now explicitly construct a splitting function $\mathcal{G}((h)\cap K)\rightarrow \mathcal{G}((h))\times \mathcal{G}(K)$. Define the function $w\mapsto(\phi(w), \psi(w))$ as follows: $\phi(w) = h$, $\psi(he) = e$, and $\psi(hy) = xy$, where $x$ is some fixed vertex chosen in $h$. We now verify the two properties of this splitting function in turn.
        
                (S1): We need to show that for all $w$ generating the intersection, $w = \lcm(\phi(w), \psi(w))$. If $w = he$, then $\phi(w) = h$ and $\psi(w) = e$, so their least common multiple is indeed $he = w$. If $w = hy$, then $\phi(w) = h$ and $\psi(w) = xy$, and since $x\in h$ and $y\notin h$ by definition their least common multiple is $hy = w$.
        
                (S2): We need to show that for every subset $G'$ of generators in the intersection, both $\lcm(\phi(G'))$ and $\lcm(\psi(G'))$ strictly divide $\lcm(G')$. We know that $\lcm(\phi(G'))$ is always just $h$, which certainly divides $\lcm(G')$. The set $\psi(G')$ consists of $m$ distinct edges $e_1,...,e_m$ and $n$ distinct terms of the form $xy_1,...,xy_n$. The lcm of all these terms must be $\lcm(x, e_1, \dots, e_m, y_1, \dots, y_n)$. This strictly divides $\lcm(G')$, which is the same $\lcm$ with $x$ replaced by all of $h$.
            \end{proof}

        Take an arbitrary nearly complete intersection $I$, call its skeleton $S$, and denote the hyperedges of its hypergraph by $h_1, h_2,...,h_n$. For $1\leq k\leq n$, let $K_j$ denote the ideal generated by all generators of $I$ except for $h_1,...,h_j$. Applying the above theorem to $I$ repeatedly yields the following:
        
        \begin{equation}\label{eq:beta-I}\beta_{ij}(I) = \beta_{ij}(S) + \sum_{a=1}^n\beta_{ij}((h_a)) + \sum_{b=1}^n\beta_{i-1 \, j}((h_b)\cap K_b).\end{equation}
        
        The minimal free resolution of $R/(h_a)$ is 
        \begin{equation}\label{eq:free-res-ha}
        0 \leftarrow R \leftarrow R(- \deg(h_a)) \leftarrow 0\end{equation} so its Betti table is trivial:
        
        \[ \beta_{ij}((h_a)) = \begin{cases} 
                  1 & (i, j) = (0, 0) \text{ or } (1, \deg(h_a)), \\
                  0 & \text{otherwise}.
               \end{cases}
        \]
        
        The following theorem can be used to compute the Betti numbers of $(h_i)\cap K_i$.
        
        \begin{theorem}\label{thm:minimal-resolution-intersection-is-ci}
        Let $I$ be a nearly complete intersection and $h$ be a generator of degree at least 3, so $I = (h)+K$ is a Betti splitting of $I$. Then $(h)\cap K$ is of the form $hJ$, where $J$ is a complete intersection.
        \end{theorem}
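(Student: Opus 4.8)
The plan is to build directly on the generator analysis from the proof of Theorem~\ref{thm:hyperedge-betti-splitting}. There we saw that every minimal generator $m_i$ of $K$ is either a $2$-edge $xy$ with $x\in h$ and $y$ adjacent to all of $h$, or a (hyper)edge $e$ disjoint from $h$, and that the associated generators of $(h)\cap K$ are $\lcm(h,xy)=hy$ and $\lcm(h,e)=he$. First I would record the easy half: since each such generator is divisible by $h$, we factor it out to write $(h)\cap K = hJ$, where $J$ is generated by the vertices $y$ (each adjacent to all of $h$) together with the disjoint (hyper)edges $e$. Passing to a minimal generating set of $J$, the only divisibility relations among these generators are $y\mid e$ (edges cannot divide the degree-one $y$'s, and by minimality of $G(I)$ no edge divides another), so a disjoint edge $e$ is redundant exactly when it contains one of the $y$'s. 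Hence the minimal generators of $J$ are the distinct variables $y$ and the disjoint edges containing no such $y$.

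It then remains to prove that $J$ is a complete intersection, i.e.\ that these minimal generators have pairwise disjoint supports, and my approach is to realize $J$ as the edge ideal of a CI-hypergraph. Fix $v\in h$, set $V' = (V\setminus h)\cup\{v\}$, and form $G' = \join{h}{G}$, collapsing $h$ to the single vertex $v_h$. Using the identification $\join{h}{G}\cong\induced{V'}{G}$ from the proof of Lemma~\ref{lemma:join-preserves-nci} together with the commuting Lemma~\ref{lemma:commuting-operations}, I get that $\invert{v_h}{G'} = \induced{V'}{\invert{v}{G}}$ is an induced sub-hypergraph of the CI-hypergraph $\invert{v}{G}$, hence is itself a CI-hypergraph. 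I would then compute $\invert{v_h}{G'}$ explicitly: in $G'$ the edges meeting $h$ become edges $v_hy$ while the disjoint edges $e$ are unchanged, and inverting at $v_h$ turns each $v_hy$ into the $1$-edge $\{y\}$ and deletes, via $\minimalsub$, precisely those disjoint edges containing some $y$. The resulting edge ideal is therefore generated by the $y$'s and the $y$-free disjoint edges, which is exactly the minimal generating set of $J$ found above; thus $J=I(\invert{v_h}{G'})$ is the edge ideal of a CI-hypergraph, so $J$ is a complete intersection.

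The main obstacle is the bookkeeping in matching $J$ to $\invert{v_h}{G'}$: I must check that the $\minimalsub$ step in the inversion discards exactly the redundant disjoint edges (those meeting some $y$) that become non-minimal when passing to a minimal generating set of $J$, and that every edge of $G$ is accounted for under the collapse (edges meeting the hyperedge $h$ must be $2$-edges by joinability condition (J1), hence become the $v_hy$). As a cross-check, and an alternative that avoids the join, one can verify pairwise coprimality of the minimal generators of $J$ directly: distinct variables $y$ are trivially coprime; a variable $y$ cannot divide a minimal generator $e$ by minimality; and two disjoint edges $e,e'$ sharing a vertex $w$ would both survive in $\invert{v}{G}$ for $v\in h$, since neither contains $v$ nor any $y$ the $\minimalsub$ step cannot remove them, forcing $w$ into two edges of the CI-hypergraph $\invert{v}{G}$, a contradiction. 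Either route reduces the claim to the defining property of an NCI-hypergraph.
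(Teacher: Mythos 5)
Your proposal is correct and takes essentially the same route as the paper: both realize $J$ as the edge ideal of the hypergraph obtained by joining at $h$ and then inverting at the resulting vertex, and both conclude this is a CI-hypergraph via Lemma~\ref{lemma:join-preserves-nci} together with the commuting Lemma~\ref{lemma:commuting-operations}. The differences are cosmetic---the paper packages the lcm and factor-out-$h$ steps as operator identities (its $\mathcal{L}_h$ composed with weak induced subhypergraphs) applied to $G(K)$, whereas you track the generators explicitly and join $G(I)$ at the hyperedge $h$ itself; your closing coprimality cross-check is sound but redundant.
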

        
             \begin{proof}
                If $(m_1, \dots, m_k)$ is the minimal monomial generating set for $K$, then $(\lcm(m_1, h), \dots, \lcm(m_k, h))$ is a monomial generating set for $(h) \cap K$. Moreover, $\lcm(m_i, h) = hm'_i$ for some square-free monomial $m'_i$ coprime to $h$. Let $J = (m'_1, \dots, m'_k)$. We wish to show that $J$ is a complete intersection by reinterpreting its construction as operations on a hypergraph.
                
                Let $H = (V, E)$ be a hypergraph and define the hypergraph operation $\mathcal{L}_{v}(H)$ to be $\mathcal{L}_{v}(H) = (V, E')$ where $E' = \{e \cup \{v\} \vert e \in E\}$. Then we may translate the construction of $J$ into hypergraph operations as follows:
                
                Considering the monomial $h$ as a formal variable corresponds to taking the join of the hypergraph of $K$ at $h$. Computing the ideal generated by $\lcm(m_i, h)$ then corresponds to computing $\mathcal{L}_h(H)$ on the resulting hypergraph. Finally, factoring out the $h$ term is equivalent to inverting at $h$ and then multiplying the resulting edge ideal by $h$.
                
                Hence, if $H$ is the hypergraph of $K$, we get that the hypergraph of $J$ is given by $(\invert{h} \circ \mathcal{L}_h \circ \text{Join}_h)(H)$. But $\mathcal{I}_h = \minimalsub \circ \weakinduced{V \backslash \{v\}}$ and $\weakinduced{V \backslash \{h\}} \circ \mathcal{L}_h = \weakinduced{V \backslash \{h\}}$. Hence, the hypergraph of $J$ is given by $(\invert{h} \circ \text{Join}_h)(H)$. By Lemma~\ref{lemma:join-preserves-nci}, the join of an NCI-hypergraph at a hyperedge yields an NCI-hypergraph or a CI-hypergraph, and in either case inverting at any vertex in the support of the resulting hypergraph yields a CI-hypergraph. Thus $(\invert{h} \circ \text{Join}_h)(H)$ is a CI-hypergraph, so $J$ is a complete intersection. \end{proof}
        
        Since $(h) \cap K = hJ$ for some complete intersection $J$ with monomial generators $f_1, f_2, \ldots, f_d$, we can explicitly calculate the free resolution of $R/hJ$.  The first syzygy is generated by $hf_1, hf_2, \ldots, hf_d$ since the support of $h$ and the support of $J$ are disjoint. The second syzygy is straightforward to calculate explicitly, and it is exactly the second syzygy $R/J$.  It follows that the remaining syzygies are the corresponding Koszul relations of $R/J$, and thus the projective dimension of $(h) \cap K$ is the number of minimal generators of $hJ$.
        
        The skeleton $S$ is much more complicated, and we note only that our decomposition technique cannot be used to break it down further, as the principal ideal generated by a 2-edge of $I$ does not work with the splitting function defined in Theorem~\ref{thm:hyperedge-betti-splitting}. In this paper we will not prove results about the structure of $\beta_{ij}(S)$.
        
        \begin{example}\label{ex:inductive-splitting}
        Consider the nearly complete intersection $I$ given by the hypergraph below (over the ring $R = k[x_1,\ldots,x_8]$). The hyperedges correspond to the monomials $H = \{x_1x_2x_3, x_4x_5x_6\}$, the skeleton corresponds to the monomials in $S = \{x_ix_7, x_ix_8, x_7x_8 \, | \, 1 \leq i \leq 6\}$, and the ideal $I$ is generated by the union $S \cup H$.
        
            \begin{center}
                \begin{tikzpicture}
              		[scale=1,auto=left,every node/.style={circle,fill=orange!50, scale=0.7, draw=black},every loop/.style={}]
              		\node (n1) at (0, 3) {$x_1$};
              	    \node (n2) at (0, 2) {$x_2$};
              		\node (n3) at (0, 1) {$x_3$};
              		\node (n7) at (2, 2.5) {$x_7$};
              		\node (n8) at (2, 1.5) {$x_8$};
              		\node (n4) at (4, 3) {$x_4$};
              		\node (n5) at (4, 2) {$x_5$};
              		\node (n6) at (4, 1) {$x_6$};
              	
              		\draw (n7) -- (n1);
              		\draw (n7) -- (n2);
              		\draw (n7) -- (n3);
              		\draw (n7) -- (n4);
              		\draw (n7) -- (n5);
              		\draw (n7) -- (n6);
              	    \draw (n8) -- (n1);
              		\draw (n8) -- (n2);
              		\draw (n8) -- (n3);
              		\draw (n8) -- (n4);
              		\draw (n8) -- (n5);
              		\draw (n8) -- (n6);
              		\draw (n7) -- (n8);
              		
              		\begin{pgfonlayer}{background}
                    \begin{scope}[transparency group, opacity=0.5]
                    \fill[edge,opacity=1,color=green, line width=25pt] (n1.center) -- (n2.center) -- (n3.center) -- (n1.center);
                    \fill[edge,opacity=1,color=green, line width=25pt] (n4.center) -- (n5.center) -- (n6.center) -- (n4.center);
                    \end{scope}
                    \end{pgfonlayer}
          	    \end{tikzpicture}
          	\end{center}
          	
        With this setup, $h_1 = x_1x_2x_3$, $h_2 = x_4x_5x_6$, $K_1 = (S) + (h_2)$, and $K_2 = (S)$. For the first step of the decomposition in Equation \ref{eq:beta-I}, we write $\beta_{ij}(I) = \beta_{ij}(K_1) + \beta_{ij}(h_1) + \beta_{i-1\,j}(K_1\cap(h_1))$ and compute the intersection ideal $K_1\cap(h_1) = h_1(x_7, x_8, h_2)$. In the second step, we decompose $\beta_{ij}(K_1) = \beta_{ij}(S) + \beta_{ij}(h_2)+\beta_{i-1\,j}(K_2\cap(h_2))$ and find that $K_2\cap(h_2) = K_2\cap(h_2) = h_2(x_7, x_8)$. Both intersection ideals are monomial multiples of complete intersections, as guaranteed by Theorem \ref{thm:minimal-resolution-intersection-is-ci}, so four of the five Betti numbers in this decomposition are easy to compute.  Assuming we have computed $\beta(K_1)$ as in step 2, the component Betti tables from step 1 sum to the Betti table for $I$ as follows:
        \begingroup
        \allowdisplaybreaks
        \begin{footnotesize}
        \begin{align*}
        \beta(I) &= 
        \begin{pmatrix}
        13 & 42 & 70 & 70 & 42 & 14 & 2\\
        1  & 2  & 1  &  - & -  & -  & -\\
         - & -  & -  & -  & -  & -  & -\\
         - & -  & -  & -  & -  & -  & -
        \end{pmatrix} & \text{ from $\beta(K_1)$ }\\
        & \quad + \begin{pmatrix}
            - & - & - & - & - & - & -\\
            1 & - & - & - & - & - & -\\
            - & -  & -  & -  & -  & -  & -\\
            - & -  & -  & -  & -  & -  & -
            \end{pmatrix} & \text{ from $\beta(h_1)$}\\
        & \quad + \begin{pmatrix}
            - & - & - & - & - & - & - \\
            - & 2 & 1 & - & - & - & - \\
            - & - & - & - & - & - & - \\
            - & 1 & 2 & 1 & - & - & - 
            \end{pmatrix} & \text{from the shift of $\beta(K_1\cap(h_1))$} \\
        & =
        \begin{pmatrix}
        13 & 42 & 70 & 70 & 42 & 14 & 2\\
        2  & 4  & 2  & -  & -  & -  & -\\
        -  & -  & -  & -  & -  & -  & -\\
        -  & 1  & 2  & 1  & -  & -  & -\\
        \end{pmatrix}.
        \end{align*}
        \end{footnotesize}
        \endgroup
        \end{example}
        
        Note in particular that the projective dimension of every component other than $S$ in Example~\ref{ex:inductive-splitting} is less than 4, while the dimension of the skeleton is 7. This suggests that the projective dimension of a nearly complete intersection is primarily determined by the skeleton of its corresponding hypergraph. The following theorem captures some of this intuition:
        
        \begin{theorem}
        Let $I = K + (h)$ be a nearly complete intersection with $\deg(h)\geq 3$, and let $\pdim$ denote the projective dimension of an ideal. Then $\pdim{I} \leq \pdim{K} + 1$.
        \end{theorem}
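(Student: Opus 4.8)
The plan is to compute $\pdim{I}$ directly from the Betti splitting and then reduce the whole statement to a single lower bound on the projective dimension of $R/K$, which I would obtain combinatorially.

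First I would read the projective dimension off Theorem~\ref{thm:hyperedge-betti-splitting}. Since $I=(h)+K$ is a Betti splitting, the identity $\beta_{ij}(I)=\beta_{ij}((h))+\beta_{ij}(K)+\beta_{i-1,j}((h)\cap K)$ has nonnegative summands, so $\pdim{I}=\max\{\pdim{(h)},\pdim{K},\pdim{(h)\cap K}+1\}$. Now $(h)$ is principal, so $\pdim{(h)}=0$, and Theorem~\ref{thm:minimal-resolution-intersection-is-ci} gives $(h)\cap K=hJ$ with $J$ a complete intersection; writing $d$ for the number of minimal generators of $J$ and using that $h$ is a nonzerodivisor (so $hJ\cong J$), the resolution exhibited after that theorem shows $\pdim{(h)\cap K}=d-1$. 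Hence $\pdim{I}=\max\{\pdim{K},d\}$, and since $\pdim{K}+1=\pdim{R/K}$ the desired inequality $\pdim{I}\le\pdim{K}+1$ is \emph{equivalent} to the single statement $\pdim{R/K}\ge d$.

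To establish $\pdim{R/K}\ge d$ I would use the standard fact that over the polynomial ring $R$ the projective dimension of $R/K$ is at least the height of any associated prime of $R/K$ (immediate from Auslander--Buchsbaum together with $\operatorname{depth}(R/K)\le\dim R/P$ for $P\in\operatorname{Ass}(R/K)$). Because $K$ is squarefree its associated primes are exactly its minimal primes, which are precisely the ideals generated by the minimal vertex covers of $G(K)$, with height equal to the size of the cover; so it suffices to produce one minimal vertex cover of size $d$. Let $N$ be the set of vertices adjacent to every vertex of $h$, and let $e_1,\dots,e_b$ be the (hyper)edges of $G(K)$ disjoint from both $h$ and $N$. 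By the generator dichotomy in the proof of Theorem~\ref{thm:hyperedge-betti-splitting}, the minimal generators of $J$ are exactly the elements of $N$ together with these $e_i$, so $d=|N|+b$, and the $e_i$ are pairwise disjoint. Choosing $w_i\in e_i$, I claim $C=N\cup\{w_1,\dots,w_b\}$ is a minimal vertex cover of size $d$: it is a cover because every generator of $K$ is either $xy$ with $y\in N$ (hit by $N$) or a hyperedge disjoint from $h$, which then either meets $N$ or equals some $e_i$ (hit by $w_i$); and it is minimal because deleting $y\in N$ uncovers the edge joining $y$ to any vertex of $h$ (no vertex of $h$ lies in $C$, as $h$ is independent in $G(K)$ and avoids the $e_i$), while deleting $w_i$ uncovers $e_i$, whose only vertex in $C$ is $w_i$ (the $e_i$ avoiding $N$ and one another). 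This yields $\pdim{R/K}\ge d$.

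The step I expect to take the most care is the construction of the cover $C$ and the verification of its \emph{minimality}: it hinges on the two-part description of the generators of $K$ relative to $h$ (to guarantee $C$ is a cover) and on the disjointness of the $e_i$ coming from the complete intersection $J$ (to guarantee that each chosen vertex is critical, so that $C$ is a genuinely minimal cover and not merely a cover, which is what makes its size a height of an associated prime). Everything else is formal: once $\pdim{R/K}\ge d$ is known, the computation of the first paragraph gives $\pdim{I}=\max\{\pdim{K},d\}\le\pdim{K}+1$.
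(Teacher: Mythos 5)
Your proposal is correct, and it departs from the paper's own argument exactly at the step where the real work happens. The opening reduction is shared: both you and the paper use the Betti splitting of Theorem~\ref{thm:hyperedge-betti-splitting} to write $\pdim{I}=\max\{\pdim{(h)},\,\pdim{K},\,\pdim{(h)\cap K}+1\}$ and Theorem~\ref{thm:minimal-resolution-intersection-is-ci} to identify $(h)\cap K=hJ\cong J$ with $J$ a complete intersection on $d$ generators, so that everything comes down to $d\le\pdim{K}+1$. From there the paper shows $G(J)\cong\induced{V'}{\invert{x}{G(K)}}$ for a variable $x$ dividing $h$, counts (hyper)edges to conclude $\pdim{(h)\cap K}=\pdim{K(x=1)}$, and then invokes the claim that passing to $K(x=1)$ (a localization) cannot raise projective dimension --- a step the paper itself justifies only informally. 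You avoid localization entirely and instead prove the lower bound $\pdim{R/K}\ge d$ intrinsically: Auslander--Buchsbaum together with $\operatorname{depth}(R/K)\le\dim(R/P)$ for $P\in\operatorname{Ass}(R/K)$ gives $\pdim{R/K}\ge\operatorname{height}(P)$, a squarefree monomial ideal has $\operatorname{Ass}=\operatorname{Min}$, and minimal primes of $K$ correspond to minimal vertex covers of $G(K)$; your cover $C=N\cup\{w_1,\dots,w_b\}$ genuinely is a minimal cover of size $d$ (joinability gives the generator dichotomy, minimality of the hypergraph keeps the vertices of $h$ out of $N$ so each $y\in N$ is critical, and pairwise coprimality of the generators of the complete intersection $J$ makes each $w_i$ critical). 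The trade-offs: the paper's route stays inside its hypergraph calculus and ties the bound to $K(x=1)$, which is precisely what feeds its closing conjecture that the skeleton alone controls projective dimension; your route is more self-contained commutative algebra, replaces the informally justified localization inequality with an explicit combinatorial witness, and yields slightly more along the way, namely the exact value $\pdim{I}=\max\{\pdim{K},\,d\}$ and the sharper bound $d\le\max\{\operatorname{height}(P)\,:\,P\in\operatorname{Ass}(R/K)\}$. A minor point in your favor: your conventions are consistent ($\pdim{(h)}=0$ in the ideal convention), whereas the paper writes $\pdim{(h)}=1$; this is harmless in both arguments since that term never attains the maximum.
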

        
        \begin{proof}
        Suppose $I = K+(h)$ is as above. Then $K + (h)$ is a Betti splitting of $I$, which implies that
        $$\pdim{I} = \max(\pdim{K}, \pdim{(h)}, \pdim{K \cap  (h)} + 1)$$
        We know that $\pdim{(h)} = 1$, so it suffices to show that $\pdim{K \cap  (h)}\leq \pdim(K)$. We accomplish this by demonstrating that $\pdim{K \cap (h)} = \pdim{K(x=1)}$ for any variable $x$ dividing $h$ and then noting that $\pdim{K(x=1)}$ cannot exceed $\pdim{K}$.
        
        By Theorem~\ref{thm:minimal-resolution-intersection-is-ci}, $K \cap (h) = hJ$ where $J$ is a complete intersection. Similarly, $K(x=1)$ is a complete intersection. Since both ideals are complete intersections, their minimal free resolutions are given by the Koszul complexes on their minimal monomial generators. This implies in particular that $\pdim(J)$ is equal to the number of (hyper)edges in $G(J)$ and $\pdim(K(x=1))$ is equal to the number of (hyper)edges in $G(K(x=1))$.
        
        By the proof of Theorem~\ref{thm:minimal-resolution-intersection-is-ci}, we have that $G(J) = \invert{h}{\join{h}{G(K)}}$, and the proof of Lemma~\ref{lemma:join-preserves-nci} implies further that $G(J) \cong \invert{x}{\induced{V'}{G(K)}}$ where $V' = (V \backslash h) \cup \{x\}$ and $V$ is the collection of vertices in $G(K)$. Since $\induced{V'}$ and $\invert{x}$ commute by Lemma~\ref{lemma:commuting-operations}, we can write $G(J) \cong \induced{V'}{\invert{x}{G(K)}}$. Inverting at $x$ in $G(K)$ kills all edges connected to every vertex of the hyperedge $h$ because $N(x) = N(y)$ for every vertex $y$ in $h$, so $\induced{V'}{\invert{x}{G(K)}}$ has the same number of (hyper)edges as $\invert{x}{G(K)}$. Thus $G(J)$ and $G(K(x=1))$ have the same number of (hyper)edges, implying that their projective dimensions are equal.
        
        Since considering the ideal $K(x = 1)$ is analogous to localizing $K$ with respect to the powers of $x$, it follows that $\pdim{K(x=1)} \leq \pdim{K}$ \todo{what's the best lingo to use here?} and the proof is complete.
        \end{proof}
        
        Although we only have that $\pdim{K(x=1)}\leq\pdim{K}$ for any variable $x$ belonging to $h$, we conjecture that the inequality is always strict, which would imply that $\pdim{K + (h)} = \pdim{K}$. An important consequence of this conjecture is that the projective dimension of a nearly complete intersection should be completely determined by the skeleton of its hypergraph.
        
        Not all homological properties of nearly complete intersections appear to be as well-behaved as projective dimension, however. The regularity of these ideals, for example, may depend on either the skeleton or the higher-degree terms. The involvement of higher-degree terms is not surprising; the regularity of any monomial ideal is bounded below by the maximal degree of its generators, and hence any nearly complete intersection containing a $k$-edge must have regularity at least $k$. More surprising perhaps is the fact that the skeleton of a nearly complete intersection may have arbitrarily large regularity, as demonstrated by the following class of examples generated in degree 2.
        
        \begin{theorem}
        Let $G_n = (V_n, E_n)$ where $V_n = \{c, a_1, b_1, \dots, a_n, b_n\}$ and $E = \{(c, a_i), (c, b_i), (a_i, b_i) \vert 1 \leq i \leq n\}$. Then $\text{reg}(R/I(G_n)) = n$.
        \end{theorem}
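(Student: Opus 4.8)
The plan is to prove the two inequalities $\text{reg}(R/I(G_n)) \le n$ and $\text{reg}(R/I(G_n)) \ge n$ separately. The main tool for the upper bound is the standard regularity recursion coming from the short exact sequence
\[
0 \to \bigl(R/(I:x)\bigr)(-1) \xrightarrow{\;\cdot x\;} R/I \to R/(I,x) \to 0,
\]
which yields $\text{reg}(R/I) \le \max\{\text{reg}(R/(I:x))+1,\ \text{reg}(R/(I,x))\}$ for any variable $x$. For an edge ideal, taking $x$ to be a vertex $v$ gives the familiar descriptions $(I(G_n):v) = (N(v)) + I(G_n \setminus N[v])$ and $(I(G_n),v) = (v) + I(G_n \setminus v)$, where $G_n \setminus v$ denotes deletion of $v$ and $G_n \setminus N[v]$ denotes deletion of the closed neighborhood $N[v] = \{v\} \cup N(v)$.

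For the upper bound I would apply this recursion at the central vertex $v = c$. Because $c$ is adjacent to every other vertex, $N[c] = V_n$, so $G_n \setminus N[c]$ is the empty graph and $R/(I:c) \cong k[c]$, giving $\text{reg}(R/(I:c)) = 0$ and a contribution of $1$ from the first term. Deleting $c$ removes every edge through $c$ and leaves precisely the $n$ disjoint edges $a_1b_1, \dots, a_nb_n$, so $R/(I(G_n),c) \cong k[a_1,b_1,\dots,a_n,b_n]/(a_1b_1,\dots,a_nb_n)$ is a complete intersection of $n$ quadrics. Its minimal free resolution is the Koszul complex on these generators, whose regularity is $\sum_{i=1}^n(\deg(a_ib_i)-1) = n$. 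Hence $\text{reg}(R/I(G_n)) \le \max\{1,n\} = n$.

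For the lower bound I would exhibit the edges $a_1b_1, \dots, a_nb_n$ as an induced matching of size $n$: the induced subgraph $H$ of $G_n$ on the vertex set $\{a_1,b_1,\dots,a_n,b_n\}$ is exactly the disjoint union of these $n$ edges, since distinct indices share no edges and each $a_i$ is adjacent only to $b_i$ within this set. As regularity is monotone under passage to induced subgraphs and $I(H)$ is again the complete intersection of $n$ quadrics computed above, we obtain $\text{reg}(R/I(G_n)) \ge \text{reg}(R/I(H)) = n$. Together with the upper bound this forces $\text{reg}(R/I(G_n)) = n$.

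I expect no substantial obstacle here; the decisive observation is simply that $c$ is the right vertex at which to apply the recursion, because deleting its closed neighborhood collapses the $G_n \setminus N[c]$ term to the empty graph while deletion of $c$ alone exposes the complete intersection $G_n \setminus c$ that governs both the upper and lower bounds. The only points requiring care are the degree shift $(-1)$ in the short exact sequence and the degenerate empty-graph case of the recursion, both of which are routine.
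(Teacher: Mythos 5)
Your proof is correct, and it reaches the theorem by a genuinely different route than the paper, at least for the upper bound. The paper's proof is a single application of the two-sided bound $\text{ind-match}_{\{K_2,C_5\}}(G_n) \leq \text{reg}(R/I(G_n)) \leq \text{min-match}_{\{K_2,C_5\}}(G_n)$ from \cite[Corollary 3.9]{SeyedFakhari2017}: since $G_n$ contains no $C_5$, both invariants reduce to matchings by disjoint copies of $K_2$; the paper then classifies the maximal $\{K_2\}$-subgraphs (all of which have matching number exactly $n$, forcing the min-match side down to $n$), while the induced subgraph on $\{a_1,b_1,\dots,a_n,b_n\}$ pushes the ind-match side up to $n$. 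Your lower bound is essentially the same as the paper's: the same induced matching $a_1b_1,\dots,a_nb_n$, fed through the standard monotonicity of graded Betti numbers (hence regularity) under passage to induced subgraphs --- that fact, due to Katzman, deserves a citation in a final write-up, but it is standard. Where you genuinely diverge is the upper bound: instead of min-match you run the recursion $\text{reg}(R/I) \leq \max\{\text{reg}(R/(I:c))+1,\ \text{reg}(R/(I,c))\}$ at the dominating vertex $c$, observing that $N[c] = V_n$ collapses $R/(I:c)$ to $k[c]$ (regularity $0$) while deleting $c$ exposes the complete intersection $(a_1b_1,\dots,a_nb_n)$ (regularity $n$ via the Koszul complex). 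This is more elementary and self-contained, needing only the short exact sequence, the standard descriptions of $(I:v)$ and $(I,v)$ for edge ideals, and the resolution of a complete intersection; it also isolates the structural reason $G_n$ behaves this way, namely that $c$ is adjacent to everything. The paper's argument is shorter once the machinery of \cite{SeyedFakhari2017} is granted, and its squeeze applies uniformly whenever ind-match and min-match coincide, but it costs the combinatorial bookkeeping of enumerating all maximal $\{K_2\}$-subgraphs. The two points you flag as delicate (the twist $(-1)$ and the empty-graph degeneration of $G_n \setminus N[c]$) are both handled correctly in your argument.
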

        
        For this proof, we use the notions of $\text{ind-match}$ and $\text{min-match}$ from \cite{SeyedFakhari2017}. 
        
        Let $G = (V, E)$ be a graph. We say that a \defi{matching} of $G$ is a set of edges $M \subseteq E$ so that no two edges in $M$ share a vertex. The \defi{matching number} of $G$, denoted $\text{match}(G)$, is the largest cardinality of a matching of $G$.
        
        Now let $\mathcal{H}$ be a collection of graphs. An \defi{$\mathcal{H}$-subgraph} of $G$ is a subgraph of $G$ whose connected components belong to $\mathcal{H}$. An \defi{induced $\mathcal{H}$-subgraph} of $G$ is an $\mathcal{H}$-subgraph which is an induced subgraph of $G$. An $\mathcal{H}$-subgraph $H = (V', E')$ of $G$ is called \defi{maximal} if the subgraph induced on $V \backslash V'$ contains no nonempty $\mathcal{H}$-subgraphs.
        
        The authors of \cite{SeyedFakhari2017} define $\text{ind-match}$ and $\text{min-match}$ as follows:
        \begin{align*}
            \text{ind-match}_{\mathcal{H}}(G) &:= \max{\left\{\text{match}(H) \, \vert\, H \text{ is an induced } \mathcal{H}\text{-subgraph of } G\right\}}; \\
             \text{min-match}_{\mathcal{H}}(G) &:= \min{\left\{\text{match}(H) \, \vert\, H \text{ is a maximal } \mathcal{H}\text{-subgraph of } G\right\}}.
        \end{align*}
        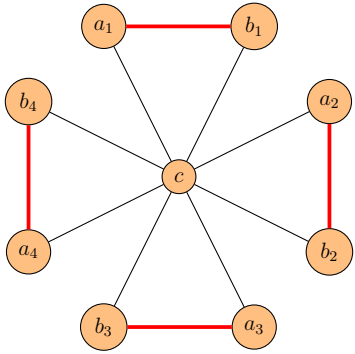
\begin{figure}
            \centering
            \begin{tikzpicture}
              		[scale=1,auto=left,every node/.style={circle,fill=orange!50, scale=0.7, draw=black},every loop/.style={}]
              		\node (c) at (0, 0) {$c$};
              		\node (a1) at (-1, 2) {$a_1$};
              		\node (b1) at (1, 2) {$b_1$};
              		\node (a2) at (2, 1) {$a_2$};
              		\node (b2) at (2, -1) {$b_2$};
              		\node (a3) at (1, -2) {$a_3$};
              		\node (b3) at (-1, -2) {$b_3$};
              		\node (a4) at (-2, -1) {$a_4$};
              		\node (b4) at (-2, 1) {$b_4$};
              	
              		\draw (c)  -- (a1);
              		\draw (c)  -- (b1);
              		\draw[line width=0.5mm, red] (a1) -- (b1);
                  	\draw (c)  -- (a2);
              		\draw (c)  -- (b2);
              		\draw[line width=0.5mm, red] (a2) -- (b2);
                  	\draw (c)  -- (a3);
              		\draw (c)  -- (b3);
              		\draw[line width=0.5mm, red] (a3) -- (b3);
                  	\draw (c)  -- (a4);
              		\draw (c)  -- (b4);
              		\draw[line width=0.5mm, red] (a4) -- (b4);
                \end{tikzpicture}
            \caption{$G_4$ with the corresponding matching set in red.}
            \label{fig:G-graphs}
        \end{figure}
        \begin{proof}
        Use the inequality 
        \[\text{ind-match}_{\{K_2, C_5\}}(G) \leq \text{reg}(R/I(G)) \leq \text{min-match}(G)_{\{K_2, C_5\}}
        \]
        given in \cite[Corollary 3.9]{SeyedFakhari2017}. Let $\mathcal{H} = \{K_2, C_5\}$. Since $G_n$ contains no subgraphs isomorphic to $C_5$, it follows that the only $\mathcal{H}$-subgraphs of $G_n$ are subgraphs consisting of disjoint copies of $K_2$. 
        The only maximal $\mathcal{H}$-subgraphs of $G_n$ are $H = (V, E)$ where $V$ and $E$ take one of the following 3 forms:
        \begin{enumerate}
            \item $V_0 = \{a_i, b_i \, | \, 1 \leq i \leq n\}$ and $E_0 = \{\{a_i, b_i\} \, | \, 1 \leq i \leq n\}$.
            \item $V_{b_i} = V_0 \setminus \{b_i\} \cup \{ c\}$ and $E_{b_i} = E_0 \backslash \{\{a_i, b_i\}\} \cup \{\{a_i, c\}\}$.
            \item $V_{a_i} = V_0 \setminus \{a_i\} \cup \{ c\}$ and $E_{a_i} = E_0 \backslash \{\{a_i, b_i\}\} \cup \{\{b_i,c\}\}$.
        \end{enumerate}
        In any of these cases, the matching number of the maximal $\mathcal{H}$-subgraph is $n$, so $\text{min-match}_{\{K_2, C_5\}}(G_n) = n$.
        
        Similarly, the subgraph induced on $\{a_1, b_1, \dots, a_n, b_n\}$ has maximal matching number among all induced $\mathcal{H}$-subgraphs. Thus, $\text{reg}(R/I(G_n)) = n$. See Figure~\ref{fig:G-graphs} for a graphical representation of $G_4$ and its corresponding matching set.
        \end{proof}
    
    \section{Appendix: Vertex-Weighted Graphs}\label{sec:VerWeightGraphs}
    
        Theorem~\ref{thm:main} characterizes NCI-hypergraphs as having an extremely simple hyperedge structure, a visual fact quantified by our discussion of the Betti tables of NCIs in the body of the paper. In particular, NCI-hypergraphs must be joinable, with hyperedges that resemble ``fat vertices." This notion naturally leads us to connect joinable hypergraphs with vertex-weighted graphs.
        
        \begin{definition}
            Given a joinable hypergraph $G$, the \defi{join} of $G$ at a hyperedge $h = (v_1,...,v_n)$ is the hypergraph $G' = \textrm{Join}_h(G)$ defined by replacing all vertices in $h$ with a single vertex $v$ of weight $n$. If all elements of $h$ are connected to a vertex $w$ in $G$, then $v$ is connected to $w$ in $G'$.
        \end{definition}
        
        This is identical to the definition of join in the main paper, except we also label the new vertex with a weight using the cardinality of the $k$-edge it came from. If we take the join of $G$ at all of its hyperedges, we are left with a unique vertex-weighted graph. In fact, join defines a bijection between joinable hypergraphs and vertex-weighted graphs, as can be seen by defining its inverse operation. Given a vertex-weighted graph $G$, the \defi{splay} of $G$ is the hypergraph $G'$ obtained by replacing each vertex $v\in G$ of weight $n>1$ with the $n$ vertices $v_1,...,v_n$. Each $v_i$ is connected to every element of $N(v)$, and the hyperedge $e = (v_1,...,v_n)$ is included in $G'$ as well. The definition of vertex inversion on a vertex-weighted graph is induced directly from the definition of vertex inversion on the associated joinable hypergraph via the join and splay operations.
        
        \begin{figure}
            \centering
            \begin{tikzpicture}
              		[scale=1,auto=left,every node/.style={circle,fill=orange!50, scale=0.7, draw=black},every loop/.style={},baseline={(n2.base)}]
              		\node (n1) at (0, 3) {};
              	    \node (n2) at (0, 2) {};
              		\node (n3) at (0, 1) {};
              		\node (n7) at (2, 2.5) {};
              		\node (n8) at (2, 1.5) {};
              		\node (n4) at (4, 3) {};
              		\node (n5) at (4, 2) {};
              		\node (n6) at (4, 1) {};
              	
              		\draw (n7) -- (n1);
              		\draw (n7) -- (n2);
              		\draw (n7) -- (n3);
              		\draw (n7) -- (n4);
              		\draw (n7) -- (n5);
              		\draw (n7) -- (n6);
              	    \draw (n8) -- (n1);
              		\draw (n8) -- (n2);
              		\draw (n8) -- (n3);
              		\draw (n8) -- (n4);
              		\draw (n8) -- (n5);
              		\draw (n8) -- (n6);
              		\draw (n7) -- (n8);
              		
              		\begin{pgfonlayer}{background}
                    \begin{scope}[transparency group, opacity=0.5]
                    \fill[edge,opacity=1,color=green, line width=25pt] (n1.center) -- (n2.center) -- (n3.center) -- (n1.center);
                    \fill[edge,opacity=1,color=green, line width=25pt] (n4.center) -- (n5.center) -- (n6.center) -- (n4.center);
                    \end{scope}
                    \end{pgfonlayer}
          	    \end{tikzpicture}
          	    $\xleftrightarrow{\hspace{2cm}}$
          	    \begin{tikzpicture}
              		[scale=1,auto=left,every node/.style={circle,fill=orange!50, scale=0.7, draw=black},every loop/.style={},baseline={(n1.base)}]
              		\node (n1) at (0, 0) {$3$};
              	    \node (n2) at (2, 1) {$1$};
              	    \node (n2p) at (2, -1) {$1$};
              		\node (n3) at (4, 0) {$3$};

              		\draw (n1) -- (n2);
              		\draw (n2) -- (n3);
              		\draw (n1) -- (n2p);
              		\draw (n2p) -- (n3);
              		\draw (n2) -- (n2p);
          	    \end{tikzpicture}
            \caption{A joinable hypergraph with its corresponding vertex-weighted graph representation.}
            \label{fig:splay-join-example}
        \end{figure}
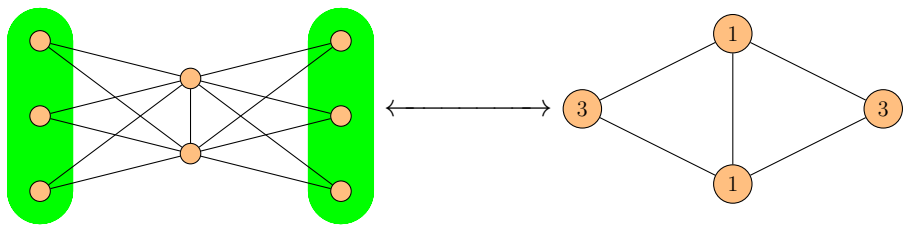
        
        Consistent with \cite{StoneMiller}, we define a \defi{CI-weighted graph} to consist of isolated edges between vertices of weight 1 and isolated vertices of weight greater than 1. It follows immediately that the splay of a weighted graph $G$ is a CI-hypergraph if and only if $G$ is a CI-weighted graph. This connection is enough for us to generalize the Miller-Stone characterization \cite{StoneMiller} of NCI-graphs in degree 2 to weighted graphs.
        
        \begin{lemma}
            A weighted graph $G$ on at least 3 vertices is an NCI-weighted graph if and only if it satisfies the following properties:
            \begin{itemize}
                \item $G$ does not contain the path on 4 vertices as a subgraph, where one end has weight greater than 1;
                \item $G$ satisfies the Miller-Stone condition when viewed as an unweighted graph.
            \end{itemize}
        \end{lemma}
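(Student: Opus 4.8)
The plan is to reduce the statement to the hypergraph characterization of Theorem~\ref{thm:main} by passing through the splay. For any weighted graph $G$, the splay $\textrm{Splay}(G)$ is automatically joinable: each vertex of weight $\geq 3$ contributes exactly one hyperedge (the set of its copies), distinct heavy vertices give disjoint hyperedges so (J1) holds, and all copies of a heavy vertex are attached to the same blobs by construction, so they share a $2$-neighbor set and (J2) holds. Since $G$ being NCI-weighted means precisely that $\textrm{Splay}(G)$ is an NCI-hypergraph, Theorem~\ref{thm:main} collapses the problem to the single condition that the skeleton $S$ of $\textrm{Splay}(G)$ is a degree-$2$ NCI-graph. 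I would then record the explicit shape of $S$: it is obtained from $G$ by replacing each weight-$n$ vertex with an independent set of $n$ vertices (an edge when $n=2$, a single vertex when $n=1$) and each edge of $G$ with a complete bipartite graph between the two blobs. The lemma thus becomes the combinatorial claim that $S$ avoids both Miller--Stone forbidden configurations if and only if conditions (a) and (b) hold, and I would prove the two implications by contraposition.

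For the direction that failure of (a) or (b) forces $S$ to fail the Miller--Stone test, suppose first that (b) fails, so unweighted $G$ contains one of the two forbidden five-vertex configurations. Choosing a single copy of each of these five vertices inside $S$ reproduces the same induced subgraph, because edges between distinct blobs are present in $S$ exactly when the corresponding edge is present in $G$ and a lone copy carries no intra-blob edges; the configuration therefore lifts verbatim and $S$ is not an NCI-graph. If instead (a) fails, take a path $a - b - c - d$ in $G$ with $a$ heavy, pick copies $a_1, a_2$ of $a$ and single copies of $b, c, d$, and examine the induced subgraph of $S$ on $\{a_1, a_2, b, c, d\}$: its edges are $a_1 b, a_2 b, b c, c d$ (together with $a_1 a_2$ when $\mathrm{wt}(a) = 2$), which has the chair with path $a_1 - b - c - d$ and pendant $a_2$ at $b$ as a spanning tree and leaf $d$. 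In both cases $S$ is not an NCI-graph, so $G$ is not NCI-weighted.

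For the converse, assume $S$ contains a Miller--Stone forbidden configuration on five vertices $w_1, \dots, w_5$ with leaf $w_1$ and spanning tree a $P_5$ or a chair, and push it down to $G$ by recording the blob of each $w_i$. If the five vertices occupy five distinct blobs, the induced subgraph of $S$ on them coincides with the induced subgraph of unweighted $G$ on the corresponding vertices, so $G$ inherits the forbidden configuration and (b) fails. Otherwise two of the $w_i$ are copies of a single heavy vertex $v$ and hence form a twin pair in $S$ (equal external neighborhoods, nonadjacent exactly when $\mathrm{wt}(v) \geq 3$). Using the rigidity of the two tree shapes --- $P_5$ has no twin pair and a chair has exactly one, its two leaves at the center --- I would pin down the roles these copies can play, and then aim to read off from the remaining vertices a length-three path in $G$ incident to $v$ that realizes the configuration forbidden by (a).

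The main obstacle is precisely this twin case. The induced configuration in $S$ may carry edges beyond its spanning tree, which alters the twin structure and therefore the positions the copies of $v$ may occupy, so one must track the weight-$2$ subcase (adjacent copies, contributing an extra triangle edge) separately from the weight-$\geq 3$ subcase, and must rule out degenerate overlaps where a third $w_i$ is also a copy of $v$ or where two of the remaining vertices share a blob. Organizing this so that every forbidden configuration in $S$ yields either the weighted-path obstruction of (a) or a genuine five-vertex Miller--Stone obstruction in unweighted $G$ is the delicate heart of the argument; in particular it is here that one must verify that no forbidden configuration escapes both conditions, and I would lean on the blow-up description of $S$ together with the commutativity of inversion with induced sub-hypergraphs (Lemma~\ref{lemma:commuting-operations}) to keep the bookkeeping under control.
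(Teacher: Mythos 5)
Your reduction through the splay is certainly the intended route---the paper in fact gives no proof of this lemma, asserting it in the appendix as a consequence of Theorem~\ref{thm:main}, the splay/join correspondence, and Miller--Stone---and your treatment of the failure of condition (b) is correct. The genuine problem lies with condition (a), and it is not only the twin-case analysis you defer at the end: the two directions of your argument require \emph{incompatible} readings of (a), because the lemma as literally stated is false under either reading. Your forward argument for ``not (a) implies not NCI-weighted'' silently assumes the path $a$--$b$--$c$--$d$ is an \emph{induced} path: when you assert that the induced subgraph of $S$ on $\{a_1,a_2,b,c,d\}$ has only the edges $a_1b$, $a_2b$, $bc$, $cd$ (plus $a_1a_2$ in the weight-2 case), you are assuming $G$ has no further edges among $a,b,c,d$. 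Under the literal ``subgraph'' reading this implication is false: take $G=K_4$ on $\{a,b,c,d\}$ with $\mathrm{wt}(a)=2$. It contains $a$--$b$--$c$--$d$ as a (non-induced) path with a heavy end, yet its splay is $K_5$, and $K_5$ is an NCI-graph (inverting at any vertex leaves the ideal generated by the other four variables), so this weighted $K_4$ \emph{is} NCI-weighted.

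If instead you read (a) as ``induced path,'' your forward direction goes through, but then the converse---the direction you only sketch---becomes false, and your deferred twin analysis is exactly where this surfaces. Take $G$ to be the triangle $a,b,c$ with a pendant vertex $d$ attached to $c$ and $\mathrm{wt}(a)=2$. Then $G$ contains no induced $P_4$ at all, and (b) holds vacuously since $G$ has only four vertices; yet the splay $S$ has $d$ as a leaf and $d$--$c$--$a_1$--$b$--$a_2$ as a spanning path on its five vertices, so $S$ fails the Miller--Stone test (equivalently, inverting $S$ at $d$ leaves $(a_1a_2,a_1b,a_2b,c)$, not a complete intersection), and $G$ is \emph{not} NCI-weighted. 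Consequently no bookkeeping, however careful, can close both directions of the statement as written. The fix is to restate (a) in Miller--Stone style before proving anything: there must be no four vertices $a,b,c,d$ with $\mathrm{wt}(a)>1$ whose \emph{induced} subgraph $H$ has $d$ as a leaf and admits a spanning tree isomorphic to $P_4$ with ends $a$ and $d$. This version is satisfied by the weighted $K_4$ and violated by the triangle-plus-pendant example, and with it your blow-up reduction, your lifting argument for (b), and a completed twin analysis (separating adjacent twins, weight $2$, from non-adjacent twins, weight $\geq 3$) can be assembled into a correct proof.
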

        
        NCI-weighted graphs and NCI-hypergraphs exist in close correspondence: the splay of an NCI-weighted graph is an NCI-hypergraph, and the join of an NCI-hypergraph is an NCI-weighted graph. Representing nearly complete intersections as weighted graphs drastically reduces the number of edges involved, making it easier to see underlying structures and properties.
        
        The reader who likes to tinker with these ideas may enjoy our graph builder tool:\\ \url{https://crgibbons.github.io/files/WeightedGraphBuilder}\textbf{}
    
    \bibliography{Refs/biblio}
    \bibliographystyle{plain}

\end{document}